\documentclass{article}

\usepackage[left=1in, right=1in]{geometry}
\usepackage{graphicx}
\usepackage{amsmath, amssymb, amsthm}
\usepackage{enumerate}
\usepackage{hyperref}
\usepackage{url}
\usepackage{color}
\usepackage{authblk}

\makeatletter
\def\maketag@@@#1{\hbox{\m@th\normalfont\normalsize#1}}
\makeatother

\newcommand{\R}{\ensuremath{\mathbb{R}}}

\newcommand{\X}{\ensuremath{\mathbb{X}}}
\newcommand{\A}{\ensuremath{\mathbb{A}}}
\newcommand{\E}{\ensuremath{\mathbb{E}}}
\newcommand{\F}{\ensuremath{\mathbb{F}}}

\newcommand{\B}{\ensuremath{\mathcal{B}}}
\newcommand{\Gr}{\ensuremath{\mbox{Gr}}}

\newcommand{\U}{\ensuremath{\mathcal{U}}}

\newtheorem{thm}{Theorem}
\newtheorem{prp}[thm]{Proposition}
\newtheorem{lem}[thm]{Lemma}
\newtheorem{cor}[thm]{Corollary}

\newtheorem*{asmpT}{Assumption~T}
\newtheorem*{asmpHT}{Assumption~HT}
\newtheorem*{infComp}{Compactness Conditions}

\newtheorem*{asmpS}{Assumption~S}
\newtheorem*{asmpTprime}{Assumption~T'}
\newtheorem*{asmpHTprime}{Assumption~HT'}

\theoremstyle{definition}

\newtheorem{rem}{Remark}

\DeclareMathOperator*{\argmin}{arg\,min}

\begin{document}

\title{\textbf{On the Reduction of Total-Cost and Average-Cost MDPs to
    Discounted MDPs}}

\author[1]{Eugene A. Feinberg}
\author[2]{Jefferson Huang}

\affil[1]{\small Department of Applied Mathematics and Statistics \authorcr Stony Brook University, Stony Brook, NY, 11794-3600, USA \vspace{2ex}}
\affil[2]{\small School of Operations Research and Information Engineering \authorcr Cornell University, Ithaca, NY, 14853-3801, USA}

\maketitle

\begin{abstract}
  This paper provides conditions under which total-cost and
  average-cost Markov decision processes (MDPs) can be reduced to
  discounted ones. Results are given for transient total-cost MDPs
  with transition rates whose values may be greater than one, as well
  as for average-cost MDPs with transition probabilities satisfying
  the condition that there is a state such that the expected time to
  reach it is uniformly bounded for all initial states and stationary
  policies. In particular, these reductions imply sufficient
  conditions for the validity of optimality equations and the
  existence of stationary optimal policies for MDPs with undiscounted
  total cost and average-cost criteria. When the state and action sets
  are finite, these reductions lead to linear programming formulations
  and complexity estimates for MDPs under the aforementioned criteria.
\end{abstract}

\paragraph{Keywords:} Markov decision process; reduction; linear program;
transient; total cost; average cost

\section{Introduction}
\label{intro}

This paper deals with the reduction of undiscounted total-cost and
average-cost Markov decision processes (MDPs) to discounted MDPs. For
undiscounted total costs, we consider a weighted-norm version of the
\textit{transient} case introduced by Veinott~\cite{veinott1969} in
the context of finite state and action sets and by
Pliska~\cite{pliska1978} in the context of Borel state and action
spaces. A feature of such MDPs is that nonnegative transition
\textit{rates}, which may not be transition probabilities, are
considered. One of the applications of such models is to the control
of branching processes; see e.g., Rothblum and
Veinott~\cite{rothblumVeinott1992} and Pliska~\cite{pliska1978}.
Other references for branching processes and other models with
transition rates greater than one are given in
Section~\ref{sec:applications}. \textit{Absorbing} MDPs, which were
introduced by Hordijk~\cite{hordijk1974} and studied in the
constrained setting by Altman~\cite{altman1999} and Feinberg and
Rothblum~\cite{feinbergRothblum2012}, can also be viewed as transient
MDPs.

It is well-known that discounted MDPs can be reduced to absorbing or
transient MDPs (see e.g.,\ \cite[p.\ 137]{altman1999}).
Theorem~\ref{thm:mainTrans} in this paper provides conditions under
which the converse is also true. In particular, the reduction comes
from a version of the similarity transformation considered by
Veinott~\cite{veinott1969}, which is attributed there to Alan
Hoffman. This reduction relates the value function and optimality
equation of the original transient model with those of the
corresponding discounted model. It implies the existence of stationary
optimal policies for transient models if certain natural conditions
hold. It also implies that the sets of optimal actions for these two
models coincide.  In the case of finite state and action sets, the
reduction shows that complexity estimates for Howard's policy
iteration algorithm for discounted MDPs imply corresponding estimates
for transient MDPs. Ye~\cite{ye2011} proved that Howard's policy
iteration algorithm, which corresponds to a block-pivoting simplex
method, and the simplex method with Dantzig's rule compute optimal
policies for discounted MDPs with a fixed discount factor in strongly
polynomial time. The complexity estimates from \cite{ye2011} were
improved in Hansen et al.~\cite{h-m-z2013} and further improved in
Scherrer~\cite{scherrer2016}. Ye~\cite{ye2011} and
Denardo~\cite{denardo2016} also obtained complexity estimates for
transient MDPs. In Section~\ref{sec:finite-state-action-transient},
Denardo's~\cite{denardo2016} estimate for Howard's policy iteration
algorithm, which corresponds to a block-pivoting simplex method, is
derived from Scherrer's~\cite{scherrer2016} estimate by using the
reduction of a transient MDP to a discounted
one.

On the other hand, the discounted-cost criterion plays an
important role in the theory of average-cost MDPs. Many results have
been proved using the so-called ``vanishing discount factor''
approach, where discounted total costs with discount factor tending to
one are used to obtain a stationary average-cost optimal policy via an
optimality inequality or equation; see e.g.,\
Sennott~\cite[Chapter~7]{sennott1999}, Sch\"{a}l~\cite{schal1993},
Hern\'{a}ndez-Lerma and Lasserre~\cite[Chapter~5]{hl-l1996}, and
Feinberg et al.~\cite{fkz2012}.

A direct reduction of average-cost MDPs to discounted ones, which
yields sufficient conditions for the existence of stationary
average-cost optimal policies, was established by Ross~\cite{ross1968,
  ross1968_1} for MDPs with Borel state space, finite action sets,
bounded costs, and a state to which the process will transition from
any state under any action with probability at least $\alpha > 0$.
This reduction and Ye's~\cite{ye2011} results were used by Feinberg and
Huang~\cite{feinbergHuang2013} to obtain iteration bounds for
average-cost policy iterations. Gubenko and
\v{S}tatland~\cite{gubenkoStatland1975} showed that a reduction is
also possible for MDPs with Borel state space, bounded costs, and
compact action sets, if a ``minorization'' condition, which generalizes
Ross's~\cite{ross1968_1} assumption, is satisfied; see also Dynkin and
Yushkevich~\cite[Chapter 7, \S 10]{dynkinYushkevich1979}.

More recently, Akian and Gaubert~\cite{akianGaubert2013} used methods
from non-linear Perron-Frobenius theory to reduce a
perfect-information zero-sum stochastic game with finite state and
action sets, containing a state being recurrent under every pair of
stationary strategies, to a discounted game with state-dependent
discount factors. In this paper, we provide a slightly modified
version of their transformation for the case of MDPs with possibly
infinite state and action spaces. This reformulation makes the
connection between their transformation and the work of
Ross~\cite{ross1968, ross1968_1} and Veinott and
Hoffman~\cite{veinott1969} more apparent. In the context of MDPs with
transition probabilities, this transformation yields a reduction of a
finite state and action average-cost problem with a state recurrent
under every stationary policy to a discounted MDP. The transformation
also allows one to write the optimality equation, prove the existence
of stationary optimal policies, and, in the case of finite state and
action sets, formulate an alternative linear program for such
average-cost problems. This program is based on the linear program
formulation for the discounted MDP, to which the original problem is
reduced. Therefore, an average-cost problem can be solved in strongly
polynomial time with complexity estimates similar to those in
Scherrer~\cite{scherrer2016}. In addition, Howard's policy iterations
for the obtained discounted MDPs coincide with Howard's policy
iterations for the initial average-cost unichain MDP. Therefore,
Scherrer's~\cite{scherrer2016} results on discounted MDPs imply that
Howard's policy iteration algorithm for the average-cost problem
computes an optimal policy in strongly polynomial time with the
complexity estimates similar to the estimates in \cite{scherrer2016}.
This also implies that, if there exists a state recurrent under all
stationary policies, the block-pivoting simplex method for the linear
programming problem for average-cost MDPs, is also strongly polynomial
with the same complexity estimates.

Previously, Zadorojniy et al.~\cite{zadorojniyEtal2009} showed that,
if every state is recurrent under every stationary policy and an MDP
satisfies a coupling property introduced there, then both discounted
and average-cost optimal policies can be computed in strongly
polynomial time. This is proved in \cite{zadorojniyEtal2009} by
introducing an algorithm that, as was shown by Even and
Zadorojniy~\cite{evenZadorojniy2012}, is equivalent to applying the
Gass-Saaty pivoting rule to the appropriate LP formulation for an
MDP. As is shown in \cite{zadorojniyEtal2009}, the aforementioned
coupling property holds for discrete-time versions of M/M/1 queues.

The model and the optimality criteria considered in ths paper are
described in Section~\ref{sec:model-description}. In
Section~\ref{sec:total-cost-mdps} we formulate the
\textit{Hoffman-Veinott (HV)} transformation \cite{veinott1969}, and
give conditions under which it leads to the reduction of the original
transient total-cost MDP to a discounted MDP with transition
probabilities. Finally, in Section~\ref{sec:average-cost-mdps} we
consider a version of Akian and Gaubert's~\cite{akianGaubert2013}
transformation for average-cost MDPs and the associated reduction to
discounted MDPs.  Most of the paper deals with countable-state MDPs.
Sections \ref{sec:finite-state-action-transient} and
\ref{sec:finite-state-action} deal with finite-state problems, while
Sections \ref{sec:extens-unco-state} and \ref{sec:extens-unco-state-1}
study MDPs with Borel state spaces.

\section{Model description}
\label{sec:model-description}

Consider a discrete-time MDP with \textit{state space} $\X$ and
\textit{action space} $\A$. Most of this paper, except
Sections~\ref{sec:extens-unco-state} and
\ref{sec:extens-unco-state-1}, deals with countable-state MDPs. We
start by introducing a countable-state MDP. Let $\X$ be countable and
$\A$ be a Borel subset of a complete separable metric space. For each
$x \in \X$, the \textit{set of available actions} $A(x)$ is a nonempty
Borel subset of $\A$. The \textit{one-step cost} function $c(x, a)$ is
(Borel-)measurable in $a \in A(x)$ for each $x \in \X$. The
\textit{transition rates} $q(y | x, a) \geq 0$ are measurable in
$a \in A(x)$ for each $x, y \in \X$ and satisfy
\begin{equation}
  \label{eq:2}
  \sup\{\textstyle\sum_{y \in \X}q(y | x, a) : x \in \X, \ a \in
  A(x)\} < \infty.
\end{equation}

\subsection{Remarks on transition rates whose sum may be greater than one}
\label{sec:applications}
The case where $\sum_{y \in \X}q(y | x, a)$ is possibly greater than
one for some state-action pairs has been studied under various
names. In Rothblum and Veinott~\cite{rothblumVeinott1992} and in
Rothblum and Whittle~\cite{rothblumWhittle1982}, such models are
called \emph{branching Markov decision chains}. They have also been
referred to as \emph{Markov population decision chains} in
\cite{veinott2008}, \cite{eavesVeinott2014}. As is explained in
Remark~\ref{rem:equivForm} below, such models can be viewed as Markov
decision processes with transition probabilities and a
state-action-dependent discount factor that is possibly greater than
one. The case of a constant discount factor, which is possibly greater
than one is studied in Hinderer and Waldmann
\cite{hindererWaldmann2005}.

Such models are applicable in a diverse array of contexts.  For
example, Markov decision models with transition rates with values
possibly greater than one appear in multi-armed bandit problems with
risk-seeking utility functions; see Denardo et
al.~\cite{denardoEtal2013, denardoEtal2007}. In addition, their
relevance to the control of multitype branching processes, which can
be used to model problems in infinite particle systems, marketing, and
population genetics, is explained in Pliska \cite{pliska1976,
  pliska1978}. Other relevant application areas are described in Eaves
and Veinott \cite{eavesVeinott2014}.

\begin{rem}\label{rem:equivForm}
  Equivalently to considering transition rates $q(\cdot | x, a)$, one
  can consider transition probabilities $p(\cdot | x, a)$ and a
  discount function $\alpha: \X \times \A \rightarrow [0, \infty)$.
  In particular, given an MDP in the latter form, let
  $q(\cdot | x, a) := \alpha(x, a)p(\cdot | x, a)$ for $x \in \X$,
  $a \in A(x)$; conversely, given transition rates $q(\cdot | x, a)$,
  let $\alpha(x, a) := q(\X | x, a)$ and
  $p(\cdot | x, a) := q(\cdot | x, a) / q(\X | x, a)$ for $x \in \X$,
  $a \in A(x)$. Expected total costs for arbitrary policies can be
  defined in a standard way via the Ionescu Tulcea Theorem (see e.g.,
  \cite[pp.\ 140-141]{bertsekasShreve1996}) by interpreting
  $\alpha(x,a)$ as a state-action dependent discount factor, and $p$
  as a transition probability. The existing literature on total-cost
  MDPs with transition rates having values possibly greater than one
  deals only with Markov policies; see e.g., \cite{pliska1976,
    pliska1978, rothblumWhittle1982, eavesVeinott2014}. This remark
  overcomes this limitation.  However, for transient total-cost models
  this remark and the reduction to a discounted MDP with transition
  probabilities and a discount factor less than one
  (Section~\ref{sec:results}) imply the optimality of stationary
  policies over all randomized history-dependent ones.  Therefore, we
  mostly consider only stationary policies in this paper. We remark
  that, when (\ref{eq:2}) holds, it is also possible to transform the 
  original total-cost problem to a discounted one with a constant
  discount factor possibly greater than one; see
  \cite[Remark~5]{hindererWaldmann2003}.
\end{rem}

\subsection{Optimality criteria}
\label{sec:optimality-criteria}

A \textit{stationary policy} is a mapping $\phi: \X \rightarrow \A$
satisfying $\phi(x) \in A(x)$ for each $x \in \X$; let $\F$ denote the
set of all such policies. It can be shown that it suffices to consider
such policies for the optimality criteria considered in this paper;
see Remarks~\ref{rem:total-nonstationary-policies} and
\ref{rem:avg-nonstationary-policies}. Under $\phi \in \F$, the
decision-maker always selects the action $\phi(x)$ when the current
state is $x$. For $\phi \in \F$, consider the matrix of one-step
transition rates $Q_\phi$ with elements $q(y | x, \phi(x))$,
$x, y \in \X$. Also, given a \textit{weight function}
$W:\X \rightarrow [1, \infty)$ and a matrix $B$ with elements
$B(x, y)$, $x, y \in \X$, let
\begin{displaymath}
\|B\|_W := \sup_{x \in \X}W(x)^{-1}\sum_{y \in \X}|B(x, y)|W(y).
\end{displaymath}
If $W(x)=1$ for all $x\in\X,$ then $||B||_W = ||B||:= \sup_{x\in\X}
\sum_{y\in\X} |B(x,y)|.$  If the function $W$ is bounded from above
and below by a finite constant $C,$ then
\begin{equation}\label{eqEFEF1}
||B||_W\le C||B||.
\end{equation}
 In particular, if $\X$ is a finite set, then \eqref{eqEFEF1} holds
 with $C=\max_{x\in\X} W(x).$

For undiscounted total costs, which are considered in
Section~\ref{sec:total-cost-mdps}, the following generalization of the
transience condition
studied in Veinott~\cite{veinott1969} and Pliska~\cite{pliska1978} is
assumed to hold.
\begin{asmpT} \
  \begin{enumerate}[(i)]
  \item   The MDP is \textnormal{transient}, that is,\ there is a weight function
  $V:\X \rightarrow [1, \infty)$ and a constant $K \geq 1$
  that satisfy
  \begin{equation}
    \label{eq:transienceAsmp}
    \| \sum_{n = 0}^\infty Q_\phi^n \|_V \leq K < \infty \quad \text{for
      all} \ \phi \in \F.
  \end{equation}
\item   There is a constant $\bar{c} < \infty$ satisfying
  \begin{equation}\label{eq:c-Vbounded}
    \sup_{a \in A(x)}|c(x, a)| \leq \bar{c}V(x) \qquad \text{for all}
    \ x \in \X.
  \end{equation}
\item For every $x \in \X$ the mapping
  \begin{displaymath}
    a \mapsto \sum_{y \in \X}q(y | x, a)V(y) < \infty, \qquad a \in A(x),
  \end{displaymath}
  is continuous on $A(x)$.
  \end{enumerate}

\end{asmpT}

For $V \equiv 1$, a number of conditions sufficient for or equivalent
to (\ref{eq:transienceAsmp}) are provided in
Pliska~\cite{pliska1978}. If the state and action sets are finite,
then Assumption T is equivalent to the assumption that there exists a
constant $K$ such that
$\|\sum_{n = 0}^\infty Q_\phi^n\| \leq K < \infty$. For finite state
and action sets, Assumption~T can be checked in strongly polynomial
time using the procedure described in \cite[proof of
Theorem~1]{veinott1974}, where it is attributed to Eric Denardo; see
also \cite[Lemma~10]{denardo2016}.

For $\phi \in \F$, let $c_\phi(x) := c(x, \phi(x))$ for $x \in
\X$. Under Assumption~T, the \textit{total cost} incurred under
$\phi \in \F,$ when the initial state is $x \in \X,$ is
\begin{displaymath}
  v^\phi(x) := \sum_{n = 0}^\infty Q_\phi^n c_\phi(x).
\end{displaymath}
A policy $\phi_*$ is \textit{total-cost optimal} if
$v^{\phi_*}(x) = \inf_{\phi \in \F}v^\phi(x) =: v(x)$ for all $x \in \X$.

 The following characterization of Assumption~T will be used to define the
transformations described in Sections~\ref{sec:hoffm-vein-transf} and
\ref{sec:hv-ag-transformation} for total-cost MDPs.
\begin{prp}\label{prp:SH}
  Assumption~T(i) holds if and only if there is a  function
  $\mu: \X \rightarrow [1,\infty)$ such that $V(x) \leq \mu(x) \leq
  KV(x)$ for all $x \in \X$ and
  \begin{equation}\label{eq:SH}
    \mu(x) \geq V(x) + \sum_{y \in \X}q(y | x,
      a)\mu(y), \quad x \in \X, \ a \in A(x).
  \end{equation}
\end{prp}
\begin{proof}
  Suppose there is a function $\mu:\X \rightarrow [1, \infty)$ that satisfies
  $V(x) \leq \mu(x) \leq KV(x)$ for all $x \in \X$ and (\ref{eq:SH}).
  Consider an arbitrary $\phi \in \F$. According to (\ref{eq:SH}),
  \begin{displaymath}
    \mu(x) \geq V(x) + \sum_{y \in \X}q(y | x, \phi(x))\mu(y) \qquad
    \text{for all} \ \ x \in \X,
  \end{displaymath}
  which, since $\mu$ is nonnegative and majorized by $KV$, implies
  that for $N = 1, 2, \dots$
  \begin{displaymath}
    KV(x) \geq \sum_{n=0}^{N-1}Q_\phi^nV(x) + Q_\phi^N\mu(x) \geq
    \sum_{n=0}^{N-1}Q_\phi^nV(x) \qquad \text{for all} \ \ x \in \X.
  \end{displaymath}
  Hence
  \begin{equation}
    \label{eq:16}
  K \geq
  V(x)^{-1}\lim_{N\rightarrow\infty}\sum_{n=0}^{N-1}Q_\phi^nV(x)
  \qquad \text{for all} \ \ x \in \X.
  \end{equation}
  Since $\phi \in \F$ is arbitrary, it follows from (\ref{eq:16})
  that Assumption~T holds.

  Conversely, suppose Assumption~T holds and consider the operator $\U$ defined for functions $u: \X
  \rightarrow [0, \infty)$ by
  \begin{displaymath}
    \U u(x) := \sup_{A(x)}\left[V(x) + \sum_{y \in \X}q(y | x,
      a)u(y)\right], \quad x \in \X.
  \end{displaymath}
  Let $u_0 := V$, and for $n = 1, 2, \dots$ let
  $u_n = \U u_{n - 1}$. Note that the positivity of $V$ implies $V \leq
  u_n \leq u_{n+1}$ for all $n$. Furthermore, letting $\mu :=
  \lim_{n\rightarrow\infty}u_n$, Lebesgue's monotone convergence
  theorem implies that $\mu = \U\mu$. Hence to complete a proof, it
  suffices to show that $u_n \leq KV$ for all $n$.

  Note that $u_0 = V \leq KV$ because $K \geq
  1$. Next, suppose $u_n \leq KV$ for some nonnegative integer $n$,
  and consider an arbitrary $\epsilon > 0$. Let $\phi^\epsilon$ be a
  stationary policy satisfying
  \begin{displaymath}
    V(x) + \sum_{y \in \X}q(y | x, \phi^\epsilon(x))u_n(y) \geq
    \U u_n(x) - \epsilon(KV(x))^{-1}, \qquad x \in \X.
  \end{displaymath}
  Define $\tilde{u}_0 := u_n.$ For $N = 1, 2, \dots$ let
  \begin{equation}\label{eq:tilde-u}
    \tilde{u}_N(x) := \sum_{i=0}^{N-1}Q_{\phi^\epsilon}^iV(x) +
    Q_{\phi^\epsilon}^Nu_n(x), \qquad x \in \X.
  \end{equation}
  Since $0 \leq u_n \leq KV$, it follows that $0 \leq Q_{\phi^\epsilon}^Nu_n \leq
  KQ_{\phi^\epsilon}^NV$ for all $N,$ which according to Assumption~T
  implies that $Q_{\phi^\epsilon}^N u_n \rightarrow 0$ as $N
  \rightarrow \infty$. Hence it follows from (\ref{eq:tilde-u}) and
  Assumption~T that
  \begin{equation}
    \label{eq:17}
    \lim_{N\rightarrow\infty}\tilde{u}_N(x) \leq \sum_{i=0}^\infty
    Q_{\phi^\epsilon}^iV(x) \leq KV(x) \qquad \text{for all} \ \ x \in \X.
  \end{equation}
  Next, we claim that
  \begin{equation}
    \label{eq:15}
    \tilde{u}_N(x) \geq u_{n+1}(x) -
    \epsilon(KV(x))^{-1}\sum_{i=0}^{N-1}Q_{\phi^\epsilon}^iV(x) \qquad
    \text{for all} \ \ x \in \X, \ N \geq 1.
  \end{equation}
  Observe that (\ref{eq:17}) and (\ref{eq:15}) together with
  Assumption~T imply
  \begin{displaymath}
    KV(x) \geq u_{n+1}(x) - \epsilon \qquad \text{for all} \ \ x \in \X.
  \end{displaymath}
  Since $\epsilon > 0$ is arbitrary, this implies by induction that
  $u_n \leq KV$ for all $n$, from which the Proposition follows. To
  verify that (\ref{eq:15}) holds, first observe that for all
  $x \in \X$,
  \begin{displaymath}
    \tilde{u}_1(x) = V(x) + Q_{\phi^\epsilon}u_n(x) \geq \U u_n(x) -
    \epsilon(KV(x))^{-1} = u_{n+1}(x) - \epsilon(KV(x))^{-1}.
  \end{displaymath}
  Next, suppose $\tilde{u}_N \geq u_{n+1} -
  \epsilon(KV)^{-1}\sum_{i=0}^{N-1}Q_{\phi^\epsilon}^iV$ for some $N
  \geq 1$. Then, since $u_{n+1} \geq u_n$, it follows that for $x \in \X$
  \begin{align*}
    \tilde{u}_{N+1}(x) &= V(x) + Q_{\phi^\epsilon}\tilde{u}_N(x) \\
    &\geq V(x) + Q_{\phi^\epsilon}u_{n+1}(x) -
      \epsilon(KV(x))^{-1}\sum_{i=0}^{N-1}Q_{\phi^\epsilon}^{i+1}V(x) \\
    &\geq V(x) + Q_{\phi^\epsilon}u_n(x) -
      \epsilon(KV(x))^{-1}\sum_{i=1}^NQ_{\phi^\epsilon}^i V(x) \\
    &\geq \U u_n(x) - \epsilon(KV(x))^{-1} -
      \epsilon(KV(x))^{-1}\sum_{i=1}^NQ_{\phi^\epsilon}^i V(x) \\
    &= u_{n+1}(x) - \epsilon(KV(x))^{-1}\sum_{i=0}^{(N+1) -
      1}Q_{\phi^\epsilon}^iV(x).
  \end{align*}
\end{proof}

\begin{lem}\label{lem:mu-integral-cont}
  Suppose statements (i) and (iii) of Assumption~T hold, and let
  $\mu$ be the function described in the statement of
  Proposition~\ref{prp:SH}. Further, suppose that for every
  $x,y \in \X$ the mappings $a \mapsto q(y | x, a)$ and
  $a \mapsto q(\X | x, a)$ are continuous on $A(x)$. Then for every
  $x \in \X$ the mapping
  \begin{displaymath}
    a \mapsto \sum_{y \in \X}q(y | x, a)\mu(y), \qquad a \in A(x),
  \end{displaymath}
  is continuous on $A(x)$.
\end{lem}
\begin{proof}
  Fix $x \in \X$, and let $\{a_n\}$ be any sequence in $A(x)$ converging
  to some $a \in A(x)$. Under the hypotheses of the lemma, the
  sequence of measures $\{q(\cdot | x, a_n)\}$ converges setwise to
  $q(\cdot | x, a)$; for a definition of setwise convergence of
  measures, see e.g., \cite[p.\ 269]{royden1988}. Since
  $0 \leq \mu(x) \leq KV(x)$ for all $x \in \X$, and
  $\sum_{y \in \X}q(y | x, a)V(y) < \infty$, it follows from the
  dominated convergence theorem for setwise converging measures (see
  e.g., \cite[Proposition~18]{royden1988}) that
  \begin{displaymath}
    \lim_{n\rightarrow\infty}\sum_{y \in \X}q(y | x, a_n)\mu(y) =
    \sum_{y \in \X}q(y | x, a)\mu(y).
  \end{displaymath}
\end{proof}

\begin{rem}\label{rem:tau}
  For $\phi \in \F$ and $x \in \X$, let
  $\tau^\phi(x) := \sum_{n = 0}^\infty Q_\phi^ne(x)$ and
  $\tau(x) := \sup_{\phi \in \F}\tau^\phi(x)$. Then it follows from
  \cite[Proposition~9.6.4]{hl-l1999} that $\tau(x) \leq KV(x)$ for all
  $x \in \X$. When the transition rates $q$ are substochastic, that
  is, $\sum_{y \in \X}q(y | x, a) \leq 1$ for all $x \in \X$ and
  $a \in A(x)$ (if equality holds for all $x$ and $a$, then $q$ is
  called \textit{stochastic}), the quantity $\tau^\phi(x)$ can be
  interpreted as the expected total lifetime of the process under the
  policy $\phi$ when $x$ is the initial state.
\end{rem}

For average costs, which are dealt with in
Section~\ref{sec:average-cost-mdps}, Assumption~HT on hitting times formulated below is assumed to
hold. To state it, for $z \in \X$ and $\phi \in \F$ consider the matrix
$\mathbin{_z Q_\phi}$ with elements
\begin{displaymath}
  \mathbin{_z Q_\phi}(x, y) :=
  \begin{cases}
    q(y | x, \phi(x)), &\quad \text{if} \ x \in \X, \ y \neq z, \\
    0, &\quad \text{if} \ x \in \X, \ y = z.
  \end{cases}
\end{displaymath}
\begin{asmpHT} \
  \begin{enumerate}[(i)]
  \item   There is a state $\ell \in \X$ and a constant $K^*$ satisfying
  \begin{equation}\label{eq:HT}
    \| \sum_{n = 0}^\infty \mathbin{_\ell Q_\phi^n}\| \leq K^* < \infty
    \quad \text{for all} \ \phi \in \F.
  \end{equation}
\item The one-step cost function $c$ is bounded.
  \end{enumerate}
\end{asmpHT}

\begin{rem} \label{rem:T-vs-HT} Observe that Assumptions T and HT are
  related.  If an MDP satisfies Assumption HT then, if state $\ell$
  and all transition rates to it are removed, the truncated MDP is
  transient with $V \equiv 1$.  In particular, when the transition
  rates are substochastic or the sets $\X$ and $A(\ell)$ are finite,
  Assumption HT for the initial MDP and Assumption T with $V \equiv 1$
  for the MDP with the state $\ell$ removed are equivalent.  For the
  substochastic case, $K^*\le K+1,$ where $K$ is the constant from
  Assumption T for the truncated MDP. This is true because the truncated
  MDP does not contain the state $\ell$, whereas $K^*$ is an upper bound
  on the mean recurrence time for all the states of the original MDP, including the state $\ell,$ under any stationary
  policy. 
\end{rem}

When $q$ is substochastic, Assumption~HT means that when the initial
state is $x$, the expected hitting time to state $\ell$ under any
stationary policy is bounded above by $K^*$. When the state and action
sets are finite, Assumption~HT is equivalent to state $\ell$ being
recurrent under all stationary policies. According to Feinberg and
Yang~\cite{feinbergYang2008}, Assumption~HT can be checked in strongly
polynomial time. We remark that any MDP satisfying Assumption~HT is
unichain, and that in general the problem of checking whether an MDP
is unichain is NP-hard~\cite{tsitsiklis2007}.  In addition,
Assumption~HT is related to many other recurrence conditions that have
been used to study average-cost MDPs; see e.g.,\ the surveys by
Federgruen et al.~\cite{federgruenHordijkTijms1978},
Thomas~\cite{thomas1980}, and Hern\'{a}ndez-Lerma et
al.~\cite{hl-mdo-cc1991}.

For the initial state $x \in \X$, the \textit{average cost}
incurred under $\phi \in \F$ is
\begin{displaymath}
  w^\phi(x) := \limsup_{N \rightarrow \infty}\frac{1}{N}\sum_{n = 0}^{N - 1}Q_\phi^nc_\phi(x).
\end{displaymath}
A policy $\phi_*$ is \textit{average-cost optimal} if $w^{\phi_*}(x) =
\inf_{\phi \in \F}w^\phi(x) =: w(x)$ for all $x \in \X$.

According to Rothblum~\cite{rothblum1975}, a stationary policy $\phi$
is called \emph{normalized} if $\sum_{n=0}^\infty \beta^nQ_\phi^n$
converges for all $\beta \in (0,1).$ If Assumption T holds or the
transition rates $q$ are substochastic, then any stationary policy is
normalized.  Given $\beta \in [0, 1)$ and an initial state $x \in \X$,
the \textit{$\beta$-discounted cost} incurred under a normalized
policy $\phi \in \F$ is
\begin{displaymath}
  v_\beta^\phi(x) := \sum_{n = 0}^\infty \beta^n Q_\phi^nc_\phi(x).
\end{displaymath}
A policy $\phi_*$ is \textit{$\beta$-optimal} if
$v_\beta^{\phi_*}(x) = \inf_{\phi \in \F}v_\beta^\phi(x) =:
v_\beta(x)$ for all $x \in \X$.

In this paper, transformations to discounted MDPs with stochastic
transition rates are considered.  Discounted MDPs with nonstochastic
transition rates are mentioned only in Remark~\ref{rem3}, where
complexity estimates for discounted MDPs with transition rates
satisfying Assumption T are provided.

%


\section{Undiscounted total costs}
\label{sec:total-cost-mdps}

The transformation of the original transient MDP to a discounted one,
which we call the \textit{Hoffman-Veinott (HV)} transformation, is
given in Section~\ref{sec:hoffm-vein-transf}. Under the hypotheses of
Theorem~\ref{thm:mainTrans} in Section~\ref{sec:results}, a stationary
optimal policy exists for the transformed discounted MDP, and the sets
of optimal policies for the transformed and original MDPs
coincide. The finite state and action case is considered in
Section~\ref{sec:finite-state-action-transient}. The Borel-state case
is treated in Section~\ref{sec:extens-unco-state}.

\subsection{HV transformation}
\label{sec:hoffm-vein-transf}

Let Assumption~T hold. By Proposition~\ref{prp:SH}, there is a
nonnegative function $\mu$ on $\X$ that satisfies $V \leq \mu \leq KV$
and (\ref{eq:SH}). Objects associated with the discounted MDP will be
indicated by a tilde. The state space is
$\tilde{\X} := \X \cup \{\tilde{x}\}$, where $\tilde{x} \not\in \X$ is
a cost-free absorbing state. Letting $\tilde{a}$ denote the only
action available at state $\tilde{x}$, the action space is
$\tilde{\A} := \A \cup \{\tilde{a}\}$ and for $x \in \tilde{\X}$ the
set of available actions is unchanged if $x \in \X$, namely
\begin{displaymath}
  \tilde{A}(x) :=
  \begin{cases}
    A(x), &\quad \text{if} \ x \in \X,\\
    \{\tilde{a}\}, &\quad \text{if} \ x = \tilde{x}.
  \end{cases}
\end{displaymath}
Define the one-step costs $\tilde{c}$ as
\begin{displaymath}
  \tilde{c}(x, a) :=
  \begin{cases}
    \mu(x)^{-1}c(x, a), &\quad \text{if} \ x \in \X, \ a \in A(x),\\
    0, &\quad \text{if} \ (x, a) = (\tilde{x}, \tilde{a}).
  \end{cases}
\end{displaymath}
To complete the definition of the discounted MDP, choose a discount
factor
\begin{displaymath}
  \tilde{\beta} \in \left[\frac{K - 1}{K}, 1\right),
\end{displaymath}
and let
\begin{equation}\label{eq:pTilde}
  \tilde{p}(y | x, a) :=
  \begin{cases}
    \frac{1}{\tilde{\beta}\mu(x)}q(y | x, a)\mu(y), &\quad \text{if} \
    x,y \in \X, \ a \in A(x), \\
    1 - \frac{1}{\tilde{\beta}\mu(x)}\sum_{y \in \X}q(y | x, a)\mu(y),
    &\quad \text{if} \ y = \tilde{x}, \ x \in \X, \ a \in A(x), \\
    1 &\quad \text{if} \ y = x = \tilde{x}, \ a = \tilde{a}.
  \end{cases}
\end{equation}
Note that $\tilde{p}(\cdot | x, a)$ is a probability distribution on
$\tilde{\X}$ for each $x \in \tilde{\X}$ and $a \in \tilde{A}(x)$.
Also, since $\tilde{A}(\tilde{x})$ is a singleton, the sets of
policies for these two models coincide. Let
$\tilde{v}_{\tilde{\beta}}^\phi(x)$ denote the
$\tilde{\beta}$-discounted cost incurred under the policy $\phi$ when
the initial state of this MDP is $x \in \tilde{\X}$, and let
$\tilde{v}_{\tilde{\beta}}(x) = \inf_{\phi \in
  \F}\tilde{v}_{\tilde{\beta}}^\phi(x)$ for $x \in \tilde{\X}$.

\paragraph{Relation to Veinott's positive similarity transformation.}

Veinott's~\cite{veinott1969} positive similarity transformation is
defined for transient MDPs with finite state and action sets as
follows. Given a diagonal matrix $B$ with positive diagonal entries,
let
\begin{displaymath}
  \tilde{c}_\phi := Bc_\phi \quad \text{and} \quad \tilde{P}_\phi :=
  B Q_\phi B^{-1}, \qquad \phi \in \F.
\end{displaymath}
According to Veinott~\cite{veinott1969}, properties that are invariant
under this transformation include the transience of a policy, the
optimality of a policy, and the geometric convergence of value
iteration to the unique fixed point of the optimality
operator. Further, letting $\mu$ be the unique vector satisfying
\begin{equation}\label{eq:hoffmanIdea}
  \mu(x) = \max_{\phi \in \F}[1 + Q_\phi \mu(x)], \quad x \in \X,
\end{equation}
and letting $\mu(x)^{-1}$ be the nonzero entry on the $x$-th row of
$B$, it follows from \cite[Lemma~3]{veinott1969} that if the spectral
radii of the matrices $Q_\phi$ are all less than one, then the row
sums of the matrices $\tilde{P}_\phi$ are all less than one; Veinott
attributes this result to Alan Hoffman. The first line of
(\ref{eq:pTilde}) is an implementation of Veinott's similarity
transformation that is applicable to all policies. Transformations of
the form $\mu(x)^{-1}q(y | x, a)\mu(y)$ have also been used in the
literature to reduce MDPs with unbounded one-step costs to MDPs with
bounded one-step costs; see e.g.,\ \cite[p.\ 101]{wal1981}.


\subsection{Results}
\label{sec:results}

Given $\phi \in \F$, the following proposition relates the total costs
incurred in the original undiscounted MDP with those incurred in the
discounted MDP defined by the HV transformation.
\begin{prp}\label{prp:transPolEval}
  Suppose statements (i) and (ii) of Assumption~T hold. 
  Then the one-step cost function $\tilde c$ is bounded and $v^\phi(x) = \mu(x)\tilde{v}_{\tilde{\beta}}^\phi(x)$ for each
  $\phi \in \F$ and $x \in \X$. 
\end{prp}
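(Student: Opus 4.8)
The plan is to recognize the HV transformation as Veinott's diagonal similarity transformation applied uniformly to every policy, combined with the fact that the adjoined state $\tilde{x}$ is absorbing and cost-free, and then to exploit the telescoping of powers of a diagonal conjugation. Write $B$ for the diagonal matrix on $\X$ with entries $B(x,x) := \mu(x)^{-1}$; since $\mu \geq 1$ by (\ref{eq:pliskaLem2-1}), $B$ has positive and bounded diagonal, and $B^{-1}$ is multiplication by $\mu$. The first observation is that the transformed cost satisfies $\tilde{c}_\phi = B c_\phi$ on $\X$, and that, by the first line of (\ref{eq:pTilde}), the restriction $S_\phi$ of the transformed transition matrix to $\X \times \X$ (with entries $S_\phi(x,y) = \tilde{p}(y \mid x, \phi(x))$, $x,y \in \X$) satisfies $\tilde{\beta}\, S_\phi = B Q_\phi B^{-1}$.

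Next I would reduce the discounted cost $\tilde{v}_{\tilde{\beta}}^\phi(x)$, a sum of discounted expected costs over all trajectories in $\tilde{\X}$, to a sum over trajectories confined to $\X$. Because $\tilde{x}$ is absorbing and its cost is zero, every trajectory that reaches $\tilde{x}$ contributes nothing thereafter, and the $n$-step transition probabilities between states of $\X$ coincide with the entries of $S_\phi^n$. Hence for $x \in \X$ one has $\tilde{v}_{\tilde{\beta}}^\phi(x) = \sum_{n=0}^\infty \tilde{\beta}^n S_\phi^n \tilde{c}_\phi(x)$, where $\tilde{c}_\phi$ is now taken on $\X$. Since the distributions $\tilde{p}(\cdot \mid x, a)$ are stochastic (as noted after (\ref{eq:pTilde})), $S_\phi$ is substochastic, so $\|S_\phi^n\| \leq 1$; as $c$ is bounded and $\mu \geq 1$, $\tilde{c}_\phi$ is bounded, and because $\tilde{\beta} < 1$ the series converges absolutely, which legitimizes the rearrangements below.

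The crux is then the telescoping identity $\tilde{\beta}^n S_\phi^n = (B Q_\phi B^{-1})^n = B Q_\phi^n B^{-1}$, which cancels the factor $\tilde{\beta}^n$ and, notably, removes all dependence of the answer on the choice of $\tilde{\beta}$. Substituting this and using $B^{-1}\tilde{c}_\phi = c_\phi$ gives
\begin{displaymath}
  \tilde{v}_{\tilde{\beta}}^\phi(x) = \sum_{n=0}^\infty \tilde{\beta}^n S_\phi^n \tilde{c}_\phi(x) = \sum_{n=0}^\infty B Q_\phi^n B^{-1}\tilde{c}_\phi(x) = B\Big(\sum_{n=0}^\infty Q_\phi^n c_\phi\Big)(x) = \mu(x)^{-1} v^\phi(x),
\end{displaymath}
where the inner series converges by Assumption~T (so that $v^\phi$ is well defined) and pulling $B$ out is justified by absolute convergence. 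Multiplying through by $\mu(x)$ yields the claim.

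I expect the only genuine obstacle to be the bookkeeping around the absorbing state: verifying that powers of the full stochastic transformed matrix agree on $\X \times \X$ with powers of its substochastic restriction $S_\phi$, that the cost-free absorbing nature of $\tilde{x}$ lets one discard every term involving $\tilde{x}$, and the attendant absolute-convergence justification for rearranging the resulting double series. The algebraic heart, the conjugation telescoping $\tilde{\beta}^n S_\phi^n = B Q_\phi^n B^{-1}$, is routine once $S_\phi = \tilde{\beta}^{-1} B Q_\phi B^{-1}$ has been identified.
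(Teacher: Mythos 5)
Your proof is correct and takes essentially the same route as the paper: the paper's key identity (\ref{eq:11}), namely $\tilde{\beta}^n\tilde{P}_\phi^n\tilde{c}_\phi(x) = \mu(x)^{-1}Q_\phi^n c_\phi(x)$, is exactly your telescoping of the diagonal conjugation $\tilde{\beta}S_\phi = BQ_\phi B^{-1}$ applied to $\tilde{c}_\phi = Bc_\phi$, and both arguments then sum over $n$ after discarding the cost-free absorbing state. Your write-up merely makes the similarity-transformation bookkeeping and the convergence justifications more explicit than the paper does.
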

\begin{proof}
  Consider the matrix $\tilde{P}_\phi$ with elements
  $\tilde{P}_\phi(x, y) := \tilde{p}(y | x, \phi(x))$, $x, y \in
  \X$. Then
  \begin{equation}
    \label{eq:10}
    \tilde{v}_{\tilde{\beta}}^\phi(x) = \sum_{n = 0}^\infty
    \tilde{\beta}^n\tilde{P}_\phi^n \tilde{c}_\phi(x), \quad x \in \tilde{\X}.
  \end{equation}
  Since the state $\tilde{x}$ is cost-free and absorbing, it follows
  from the definitions of $\tilde{P}_\phi$ and $\tilde{c}_\phi$ that
  \begin{equation}
    \label{eq:11}
    \tilde{\beta}^n\tilde{P}_\phi^n\tilde{c}_\phi(x) =
    \mu(x)^{-1}Q_\phi^n c_\phi(x) \quad \text{for all} \ x \in \X, \ n
    \geq 0.
  \end{equation}
  Observe that, since $\mu$ majorizes $V$, according to
  (\ref{eq:c-Vbounded}) the mapping $x \mapsto \mu(x)^{-1}c_\phi(x)$
  is bounded. Hence, combining (\ref{eq:10}) and (\ref{eq:11}), for
  $x \in \X$
  \begin{displaymath}
    \tilde{v}_{\tilde{\beta}}^\phi(x) = \mu(x)^{-1}\sum_{n = 0}^\infty
    Q_\phi^nc_\phi(x) = \mu(x)^{-1}v^\phi(x).
  \end{displaymath}
  Proposition~\ref{prp:SH} and Assumption T(ii) imply that $|\tilde{c}(x,a)|\le {\bar c}$ for all $x\in\X$ and $a\in A(x).$ 
\end{proof}

The optimality results in this section and Section~\ref{sec:results-1}
rely on the following compactness-continuity conditions.
\begin{infComp}[{cf. \cite[p.\ 181]{feinberg2002}}] \
  \begin{enumerate}[(i)]
  \item $A(x)$ is compact for each $x \in \X$;
  \item $c(x, a)$ is lower semicontinuous in $a \in A(x)$ for each $x \in
    \X$;
  \item the transition rates $q(y | x, a)$ are continuous in
    $a \in A(x)$ for each $x, y \in \X$;
  \item the transition rates $q(\X | x, a) := \sum_{y \in \X}q(y | x,
    a)$ are continuous in $a \in A(x)$ for each $x \in \X$.
  \end{enumerate}
\end{infComp}
\noindent Observe that, if the state set is finite, then assumption
(iii) of the Compactness Conditions implies assumption (iv). Also, if
the transition rates are stochastic, that is,\ $q(\X | x, a) = 1$ for
all $x \in \X$ and $a \in A(x)$, then assumption (iv) of the
Compactness Conditions always holds.

\begin{lem}\label{lem:compactnessHV}
  Suppose Assumption~T and the Compactness Conditions hold. Then the
  discounted MDP defined by the HV transformation also satisfies the
  Compactness Conditions.
\end{lem}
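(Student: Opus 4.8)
The plan is to check conditions (i)--(iv) of the Compactness Conditions for the transformed MDP one at a time; the first three are immediate, and only the second line of (\ref{eq:pTilde}) will require real work.

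For condition (i), $\tilde{A}(x) = A(x)$ is compact for $x \in \X$ by the original condition (i), and $\tilde{A}(\tilde{x}) = \{\tilde{a}\}$ is a singleton. For condition (ii), inequality (\ref{eq:pliskaLem2-1}) forces $\mu(x) \ge 1$, hence $\mu(x)^{-1} \in (0,1]$ and $|\tilde{c}(x,a)| = \mu(x)^{-1}|c(x,a)| \le |c(x,a)|$, so $\tilde{c}$ inherits boundedness from $c$ while $\tilde{c}(\tilde{x},\tilde{a}) = 0$; lower semicontinuity in $a$ survives because for fixed $x \in \X$ the function $\tilde{c}(x,\cdot)$ is the positive constant $\mu(x)^{-1}$ times the lower semicontinuous $c(x,\cdot)$, and $\tilde{A}(\tilde{x})$ is a singleton. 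Condition (iv) holds automatically: as noted after (\ref{eq:pTilde}), each $\tilde{p}(\cdot | x, a)$ is a probability distribution on $\tilde{\X}$, so $\tilde{p}(\tilde{\X} | x, a) \equiv 1$ is continuous in $a$.

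It remains to verify condition (iii), the continuity of each $\tilde{p}(y | x, \cdot)$. Inspecting the three lines of (\ref{eq:pTilde}): for $x, y \in \X$ the rate $\tilde{p}(y | x, a) = (\tilde{\beta}\mu(x))^{-1}\mu(y)\, q(y | x, a)$ is a positive constant times $q(y | x, a)$, hence continuous in $a$ by the original condition (iii); and for $y = x = \tilde{x}$ it is the constant $1$. The one nontrivial case is $y = \tilde{x}$ with $x \in \X$, where $\tilde{p}(\tilde{x} | x, a) = 1 - (\tilde{\beta}\mu(x))^{-1}\sum_{y \in \X}q(y | x, a)\mu(y)$, so that the whole lemma comes down to showing that $a \mapsto \sum_{y \in \X}q(y | x, a)\mu(y)$ is continuous on $A(x)$ for each fixed $x \in \X$. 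I expect this interchange of the limit in $a$ with the infinite sum over $y$ to be the main obstacle, and it is precisely where condition (iv) enters.

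To establish this continuity I would argue sequentially, which suffices since $A(x)$ is metrizable. Fix $x \in \X$ and let $a_n \to a$ in $A(x)$. By conditions (iii) and (iv) of the Compactness Conditions for the original MDP, $q(y | x, a_n) \to q(y | x, a)$ for every $y \in \X$ and $\sum_{y \in \X}q(y | x, a_n) \to \sum_{y \in \X}q(y | x, a)$, the latter sum being finite by (\ref{eq:2}). Applying Scheffé's lemma to the nonnegative functions $q(\cdot | x, a_n)$ then gives $\sum_{y \in \X}|q(y | x, a_n) - q(y | x, a)| \to 0$. Since $0 \le \mu(y) \le K$ by Proposition~\ref{prp:SH}, it follows that $\bigl|\sum_{y \in \X}q(y|x,a_n)\mu(y) - \sum_{y \in \X}q(y|x,a)\mu(y)\bigr| \le K\sum_{y \in \X}|q(y|x,a_n) - q(y|x,a)| \to 0$, which is the desired continuity. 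Equivalently, one can invoke the generalized dominated convergence theorem with the dominating functions $K\,q(\cdot | x, a_n)$. This completes condition (iii) and hence the lemma.
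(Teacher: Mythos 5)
Your proof is correct and follows essentially the same route as the paper's: verify each condition, note that condition (iv) for the transformed MDP is automatic since $\tilde{p}$ is stochastic, and use conditions (iii) and (iv) of the original MDP to get continuity of $\tilde{p}$. The only difference is that you spell out, via Scheff\'e's lemma and the bound $\mu \leq K$, why $a \mapsto \sum_{y \in \X}q(y | x, a)\mu(y)$ is continuous --- a step the paper's proof asserts without detail --- and your justification of it is sound.
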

\begin{proof}
  Assumptions (i)-(ii) of the Compactness Conditions imply that the sets
  $\tilde{A}(x)$ are compact and $\tilde{c}$ is bounded and is lower
  semicontinuous in $a$. In addition, assumption (iii) of the Compactness
  Conditions and Lemma~\ref{lem:mu-integral-cont} imply that $\tilde{p}(y | x, a)$ is continuous in $a
  \in A(x)$ for all $x, y \in \X$, and assumption (iv) implies that
  $\tilde{p}(\tilde{x} | x, a)$ is continuous in $a \in A(x)$ for all
  $x \in \X$. Since $\tilde{p}$ is also stochastic, it follows that
  the Compactness Conditions hold for the transformed MDP.
\end{proof}

The main result (Theorem~\ref{thm:mainTrans}) of this section relies
on the following proposition. To state it, for $\beta \in [0, 1)$
define
\begin{equation}
  \label{eq:13}
  A_\beta^*(x) := \left\{a \in A(x) \ \left\vert\vphantom{\frac{1}{1}}\right. \ v_\beta(x) = c(x, a) +
    \beta \sum_{y \in \X}q(y | x, a)v_\beta(y)\right\}, \quad x \in \X.
\end{equation}
\begin{prp}[cf.\ {\cite[pp.\ 181,
    184]{feinberg2002}}]\label{prp:total1}
  If an MDP with transition probabilities $q$ and bounded one-step
  costs $c$ satisfies the
  Compactness Conditions, then for any discount factor $\beta \in [0,
  1)$:
  \begin{enumerate}[(i)]
  \item the value function $v_\beta$ is the unique bounded function
    satisfying the optimality equation
  \begin{equation}\label{eq:HVOE}
    v_\beta(x) = \min_{A(x)}\left[c(x, a) +
      \beta\sum_{y \in \X}q(y | x,
      a)v_\beta(y)\right], \quad x \in \X;
  \end{equation}
  \item there is a stationary $\beta$-optimal policy;
  \item a policy $\phi \in \F$ is $\beta$-optimal if and only if
    $\phi(x) \in A_{\beta}^*(x)$ for all $x \in \X$.
  \end{enumerate}
\end{prp}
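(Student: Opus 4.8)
The plan is to treat this as the standard theory of discounted MDPs with compact action sets and bounded one-step costs, organized around the Bellman optimality operator $T$ defined on the space $\B$ of bounded functions $u:\X\to\R$ by
\[
  Tu(x) := \inf_{a\in A(x)}\Bigl[c(x,a)+\beta\sum_{y\in\X}q(y|x,a)u(y)\Bigr],\qquad x\in\X.
\]
Since $q$ is a transition probability and $c$ is bounded, $T$ maps $\B$ into $\B$ and is a contraction of modulus $\beta$ in the supremum norm, because $\sum_{y}q(y|x,a)=1$ gives $\|Tu_1-Tu_2\|\le\beta\|u_1-u_2\|$ for $u_1,u_2\in\B$. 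By Banach's fixed point theorem $T$ has a unique fixed point $v^*\in\B$, so the bulk of the work is to (a) upgrade the $\inf$ to an attained $\min$ admitting a measurable minimizer, and (b) identify $v^*$ with $v_\beta$.

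The technical heart, and the step I expect to be the main obstacle, is showing that for each fixed $x\in\X$ and each $u\in\B$ the map $a\mapsto\sum_{y\in\X}q(y|x,a)u(y)$ is continuous on $A(x)$, so that the inner objective $a\mapsto c(x,a)+\beta\sum_y q(y|x,a)u(y)$ is lower semicontinuous and attains its minimum on the compact set $A(x)$. The difficulty is the infinite sum: pointwise continuity of each $q(y|x,\cdot)$ (condition (iii)) alone does not control the tail. Here the specialization to transition probabilities helps, since the total masses $q(\X|x,a)$ are identically one; if $a_n\to a$ in $A(x)$, then $q(y|x,a_n)\to q(y|x,a)$ for every $y$ by (iii) with fixed total mass, so by Scheffé's lemma the convergence is in total variation, whence $\sum_y q(y|x,a_n)u(y)\to\sum_y q(y|x,a)u(y)$ for every bounded $u$. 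With this in hand, a measurable selection theorem applied to the lower semicontinuous objective over the compact sets $A(x)$ produces a selector $\phi^*\in\F$ with $\phi^*(x)$ attaining the minimum defining $Tv^*(x)$ at every $x\in\X$.

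To complete (i) and (ii) I would identify $v^*$ with $v_\beta$ by a policy-evaluation iteration. From $v^*=c_{\phi^*}+\beta Q_{\phi^*}v^*$, iterating $n$ times gives $v^*=\sum_{k=0}^{n-1}\beta^k Q_{\phi^*}^k c_{\phi^*}+\beta^n Q_{\phi^*}^n v^*$; since $q$ is stochastic, $\|Q_{\phi^*}^n\|\le1$, so $\beta^n Q_{\phi^*}^n v^*\to0$ and $v^*=v_\beta^{\phi^*}$. For an arbitrary $\phi\in\F$, the fixed-point property gives $v^*=Tv^*\le c_\phi+\beta Q_\phi v^*$ pointwise, and, since $Q_\phi$ is a monotone (nonnegative) operator, the same iteration yields $v^*\le v_\beta^\phi$. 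Hence $v^*=v_\beta^{\phi^*}\ge\inf_{\phi}v_\beta^\phi=v_\beta\ge v^*$, so $v^*=v_\beta$; this is (i) (the unique bounded solution of (\ref{eq:HVOE}) is $v_\beta$), and $\phi^*$ is a stationary $\beta$-optimal policy, which is (ii).

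Part (iii) follows from the same iteration. If $\phi(x)\in A_\beta^*(x)$ for all $x$, then by (\ref{eq:13}) $v_\beta=c_\phi+\beta Q_\phi v_\beta$, and iterating as above gives $v_\beta=v_\beta^\phi$, so $\phi$ is $\beta$-optimal. Conversely, if $\phi$ is $\beta$-optimal then $v_\beta^\phi=v_\beta$; substituting this into the evaluation identity $v_\beta^\phi=c_\phi+\beta Q_\phi v_\beta^\phi$ yields $v_\beta=c_\phi+\beta Q_\phi v_\beta$, that is, the minimum in (\ref{eq:HVOE}) is attained at $a=\phi(x)$ for every $x$, which is precisely the statement $\phi(x)\in A_\beta^*(x)$.
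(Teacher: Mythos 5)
Your proof is correct, but it takes a genuinely different route from the paper's. The paper's own proof is essentially a two-line reduction to cited results: it observes that, with $\X$ given the discrete topology, the Compactness Conditions make the transition probabilities weakly continuous, so the MDP satisfies Assumption~(W$^*$) of Feinberg, Kasyanov \& Zadoianchuk \cite{fkz2012}; the optimality equation and parts (ii)--(iii) are then read off from Theorem~2 of that paper, and the uniqueness claim in (i) is attributed to the contraction mapping principle via Denardo \cite{denardo1967}. You instead build everything from scratch: the Banach fixed point of the Bellman operator, the continuity of $a\mapsto\sum_{y}q(y|x,a)u(y)$ via Scheff\'e's lemma (which is exactly the right tool here, since condition (iii) together with the constant total mass upgrades pointwise convergence to total-variation convergence on the countable space $\X$), attainment of the minimum by lower semicontinuity on the compact sets $A(x)$, and the identification $v^*=v_\beta$ together with part (iii) by policy-evaluation iterations. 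Two small remarks. First, since $\X$ is countable and the paper imposes no measurability requirement on stationary policies as functions of $x$, you do not need a measurable selection theorem; choosing a minimizer separately for each $x$ suffices. Second, your argument establishes optimality of $\phi^*$ only within the class $\F$ of stationary policies, which is all the proposition asserts (by the definition $v_\beta:=\inf_{\phi\in\F}v_\beta^\phi$); the paper's route through \cite{fkz2012} additionally yields optimality over nonstationary randomized policies, a fact the paper invokes later in a remark. What your approach buys is a self-contained and elementary argument; what the paper's buys is brevity and the stronger optimality statement essentially for free.
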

\begin{proof}
  The Compactness Conditions imply that, if $\X$ is endowed with the
  discrete topology, then the transition probabilities $q$ are weakly
  continuous in $(x, a)$ where $x \in \X$ and $a \in A(x)$. This
  implies that the MDP satisfies Assumption~(W$^*$) in
  \cite{fkz2012}. The validity of (\ref{eq:HVOE}) and statements (ii),
  (iii) follows from \cite[Theorem~2]{fkz2012}. The uniqueness claim
  in (i) follows from the contraction mapping principle; see
  Denardo~\cite{denardo1967} for details.
\end{proof}

To state Theorem~\ref{thm:mainTrans}, let
\begin{equation}\label{eq:A-star}
  A^*(x) := \left\{ a \in A(x) \ \left\vert\vphantom{\frac{1}{1}}\right. \ v(x) = c(x, a) +
    \sum_{y \in \X}q(y | x, a) v(y)\right\}, \quad x \in \X,
\end{equation}
where $v$ is the value function of the original undiscounted total
cost MDP. 
\begin{thm}\label{thm:mainTrans}
  Suppose the original undiscounted total-cost MDP satisfies
  Assumption~T and the Compactness Conditions. Then:
  \begin{enumerate}[(i)]
  \item the value function $v = \mu \tilde{v}_{\tilde{\beta}}$ is the
    unique function   satisfying the optimality
    equation
  \begin{equation}\label{eq:5-countable}
    v(x) = \min_{A(x)}\left[c(x, a) +
      \sum_{y \in \X}q(y | x,
      a)v(y)\right], \quad x \in \X, 
  \end{equation}
   and such that
     \begin{equation}\label{eq:v-Vbounded-countable}
       \|v\|_V := \sup_{x \in \X}V(x)^{-1}|v(x)| < \infty;
    \end{equation}
  \item there is a stationary total-cost optimal policy;
  \item a policy $\phi \in \F$ is total-cost optimal if and only if
    $\phi(x) \in A^*(x)$ for all $x \in \X$, and
    \begin{equation}
      \label{eq:12-countable}
      A^*(x) = \left\{ a \in A(x) \ \left\vert\vphantom{\frac{1}{1}}\right. \ \tilde{v}_{\tilde{\beta}}(x) = \tilde{c}(x, a) +
        \tilde{\beta}\sum_{y \in \tilde{\X}}\tilde{p}(y | x, a) \tilde{v}_{\tilde{\beta}}(y)\right\}, \quad x \in \X;
    \end{equation}
    in other words, the sets of optimal actions for the original
    transient MDP and for the transformed discounted MDP with
    transition probabilities $\tilde{p}$ coincide.
  \end{enumerate}
\end{thm}
\begin{proof}
  By Lemma~\ref{lem:compactnessHV}, the transformed discounted MDP
  satisfies the Compactness Conditions. Hence statements (i)-(iii) of
  Proposition~\ref{prp:total1} hold for the transformed MDP.

  
  Straightforward calculations show that the function $v = \mu \tilde{v}_{\tilde{\beta}}$ satisfies the optimality equation \eqref{eq:5-countable} if and only if the function
  $v_\beta:=\tilde{v}_{\tilde{\beta}}$ satisfies the optimality equation \eqref{eq:v-Vbounded-countable} for the
  $\tilde{\beta}$-discounted MDP defined by the HV
  transformation.  In view of Proposition~\ref{prp:SH}, $\|v\|_V<\infty$ if and only if the function $\tilde{v}_{\tilde{\beta}}$ is bounded.  Lemma~\ref{lem:compactnessHV} and Propositions~\ref{prp:transPolEval},~\ref{prp:total1} imply statement (i). 

  According to Proposition~\ref{prp:total1}(i), there is a $\phi_* \in
  \F$ that is $\tilde{\beta}$-optimal for the transformed MDP. By
  Proposition~\ref{prp:transPolEval}, $v^{\phi_*} = \mu
  \tilde{v}_{\tilde{\beta}}^{\phi_*} = \mu \tilde{v}_{\tilde{\beta}} =
  v$, so $\phi_*$ is total-cost optimal for the original
  MDP. Therefore (ii) holds.

  It follows from the definitions of $\tilde{\X}$, $\tilde{A}$,
  $\tilde{c}$, $\tilde{\beta}$, and $\tilde{p}$ that (\ref{eq:12-countable})
  holds. Suppose $\phi \in \F$ is total-cost optimal for the original
  MDP. Then $v^\phi = v$, so since $v^\phi = c_\phi + Q_\phi v^\phi$
  it follows that $\phi(x) \in A^*(x)$ for all $x \in \X$. Conversely,
  if $\phi(x) \in A^*(x)$ for all $x \in \X$, then according to
  Proposition~\ref{prp:total1}(iii) and (\ref{eq:12}) the policy
  $\phi$ is $\tilde{\beta}$-optimal for the transformed MDP. By
  Proposition~\ref{prp:transPolEval}, this means $\phi$ is total-cost
  optimal for the original MDP. Hence (iii) holds.
\end{proof}

\begin{cor}\label{cor:mainTrans}
  Suppose Assumption~T and the Compactness Conditions hold. If an
  algorithm computes an optimal policy for the discounted
  MDP defined by the HV transformation, then this policy is optimal
  for the original undiscounted total-cost MDP.
\end{cor}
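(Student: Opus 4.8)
The plan is to deduce the corollary directly from Theorem~\ref{thm:mainTrans}, since the essential work has already been carried out there. The key preliminary observation, already noted in the description of the HV transformation, is that the policy sets of the original and transformed MDPs coincide: because $\tilde{A}(\tilde{x}) = \{\tilde{a}\}$ is a singleton, every policy $\phi \in \F$ extends uniquely to a policy on $\tilde{\X}$, and conversely. Thus it suffices to show that a policy is $\tilde{\beta}$-optimal for the transformed discounted MDP if and only if it is total-cost optimal for the original undiscounted MDP.

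First I would invoke Lemma~\ref{lem:compactnessHV} to conclude that the transformed MDP satisfies the Compactness Conditions, so that Proposition~\ref{prp:total1} applies to it with discount factor $\tilde{\beta}$. By Proposition~\ref{prp:total1}(iii), a policy $\phi$ is $\tilde{\beta}$-optimal for the transformed MDP if and only if $\phi(x)$ lies in the set of minimizers of the $\tilde{\beta}$-discounted optimality equation for each $x \in \X$, which is precisely the set appearing on the right-hand side of (\ref{eq:12}).

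Next I would apply Theorem~\ref{thm:mainTrans}(iii): equation (\ref{eq:12}) identifies this set of $\tilde{\beta}$-optimal actions with $A^*(x)$, and the theorem states that $\phi \in \F$ is total-cost optimal for the original MDP if and only if $\phi(x) \in A^*(x)$ for all $x \in \X$. Chaining these two characterizations shows that the set of $\tilde{\beta}$-optimal policies for the transformed MDP coincides with the set of total-cost optimal policies for the original MDP. Consequently, any algorithm that outputs an optimal policy for the transformed discounted MDP outputs a policy that is total-cost optimal for the original problem, which is exactly the assertion.

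Since the corollary is an immediate repackaging of Theorem~\ref{thm:mainTrans}, I do not anticipate a genuine obstacle. The only point demanding any care is the identification of the policy sets of the two models; this is harmless because the adjoined cost-free absorbing state $\tilde{x}$ carries a forced action $\tilde{a}$, so no information is lost in passing between the two MDPs, and an optimal policy for one is literally an optimal policy for the other.
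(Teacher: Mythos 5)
Your argument is correct and matches the paper's intent exactly: the paper states Corollary~\ref{cor:mainTrans} without a separate proof, treating it as an immediate consequence of Theorem~\ref{thm:mainTrans}(iii), which already asserts that the sets of optimal actions (and hence of stationary optimal policies) for the original transient MDP and the transformed discounted MDP coincide. Your chain through Lemma~\ref{lem:compactnessHV}, Proposition~\ref{prp:total1}(iii), and equation~(\ref{eq:12}) is precisely the reasoning underlying that part of the theorem, and the remark about the policy sets coinciding via the forced action $\tilde{a}$ is the same observation the paper makes when defining the HV transformation.
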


\begin{rem}\label{rem:total-nonstationary-policies}
  The HV transformation also applies to arbitrary policies if the
  total costs are defined using the equivalent formulation in terms of
  transition probabilities and state-dependent discount factors; see
  Remark~\ref{rem:equivForm}. Since stationary policies are optimal
  within the class of all policies for discounted MDPs with transition
  probabilities satisfying the Compactness Conditions \cite[p.\
  184]{feinberg2002}, the stationary total-cost optimal policies
  referred to in Theorem~\ref{thm:mainTrans} are optimal over
  nonstationary policies as well.

\end{rem}

\subsection{Finite state and action sets}
\label{sec:finite-state-action-transient}

In this section, we assume that both $\X$ and $\A$ are finite. Recall
from the paragraph after the statement of Assumption~T that, when the
state and action sets are finite, Assumption~T is equivalent to the
existence of a constant $K$ such that
\begin{equation}\label{eq:T-finite}
\sum_{n = 0}^\infty Q_\phi^n e(x) \leq K \qquad \text{for all}  \ \phi
\in \F, \ x
\in \X,
\end{equation}
where $e$ denotes the function on $\X$ that is identically equal to
one. Therefore, in this section we assume without loss of
generality that (\ref{eq:T-finite}) holds.

Corollary~\ref{cor:mainTrans} implies that an optimal
policy for the original transient MDP can be computed by solving the
following linear program (LP): 
\begin{equation}\label{eq:discLPtrans}
  \begin{aligned}
    \text{minimize} &\quad \sum_{x \in \tilde{\X}}\sum_{a \in
      \tilde{A}(x)}\tilde{c}(x, a) \tilde{z}_{x,a} &\\
    \text{such that} &\quad \sum_{a \in \tilde{A}(x)}\tilde{z}_{x, a} -
    \tilde{\beta}\sum_{y \in \tilde{\X}}\sum_{a \in \tilde{A}(y)}\tilde{p}(x |
    y, a)\tilde{z}_{y, a} = 1, & x \in \tilde{\X}, \\
    &\quad \tilde{z}_{x, a} \geq 0, & x \in \tilde{\X}, \ a \in \tilde{A}(x).
  \end{aligned}
\end{equation}
According to Scherrer~\cite[Theorem~3]{scherrer2016}, the LP
(\ref{eq:discLPtrans}) can be solved using
\begin{equation}
  \label{eq:32}
  (m-n)\left\lceil \frac{1}{1 - \tilde{\beta}} \log \frac{1}{1 -
      \tilde{\beta}}\right\rceil = O((m-n) K\log K)
\end{equation}
iterations of the block-pivoting simplex method corresponding to
Howard's policy iteration algorithm. Alternatively, if the simplex
method with Dantzig's rule is applied to the LP
(\ref{eq:discLPtrans}), then according to
\cite[Theorem~4]{scherrer2016} at most
\begin{equation}
  \label{eq:33}
  n(m-n)\left(1 + \frac{2}{1 - \tilde{\beta}} \log \frac{1}{1 -
      \tilde{\beta}}\right) = O(n(m-n)K \log K)
\end{equation}
iterations are needed to compute an optimal
solution.

Let $z_{x, a} := \tilde{z}_{x, a}/\mu(x)$ for $x \in \X$ and
$a \in A(x).$
The  LP~(\ref{eq:discLPtrans})
for this discounted MDP can be written as
\begin{equation}\label{eq:discLPtrans-1}
  \begin{aligned}
    \text{minimize} &\quad \sum_{x \in \X}\sum_{a \in
      A(x)}c(x, a) z_{x, a} &\\
    \text{such that} &\quad \sum_{a \in A(x)}z_{x, a} -
    \sum_{y \in \X}\sum_{a \in A(y)}q(x |
    y, a)z_{y, a} = \mu(x)^{-1}, & x \in \X, \\
    &\quad z_{x, a} \geq 0, & x \in \X, \ a
    \in A(x).
  \end{aligned}
\end{equation}
This is true because of the following arguments that hold all $x\in\X$ and for all $a\in A(x):$ 
(i) the objective functions for the LPs~(\ref{eq:discLPtrans}) and (\ref{eq:discLPtrans-1}) are equal
because $\tilde A(x)=A(x),$ $c(x,a)=\mu(x){\tilde c}(x,a), $
$\tilde A({\tilde x})=\{{\tilde x}\},$ and $ {\tilde c}({\tilde x},{\tilde a})=0,$ (ii)  for $x,$ the equality constraints are equivalent in these LPs because ${\tilde p}(x|{\tilde x},{\tilde a})=0,$
$q(y|x,a)={\tilde\beta}{\tilde p}(y|x,a),$ where $y\in \X,$  and the inequality constraints are equivalent because  $\mu(x)>0,$
(iii) the equality and inequality constraints for ${\tilde x}$ can be
excluded from the LP~(\ref{eq:discLPtrans}) because the former implies
that ${\tilde z}_{{\tilde x},{\tilde a}}=(1-{\tilde\beta})^{-1}(
1+\tilde{\beta}\sum_{y\in\X}\sum_{a\in A(y)}{\tilde p}({\tilde x}|y,
a)\tilde{z}_{y, a})>0$ and, in view of (i) and (ii), the variable ${\tilde z}_{{\tilde x},{\tilde a}}$ does not appear anywhere else in the LP~(\ref{eq:discLPtrans}).

Since a policy for this discounted MDP is optimal if and only if it is
optimal for the original discounted MDP defined by the HV
transformation, Corollary~\ref{cor:mainTrans} implies that an optimal
policy for the original transient MDP can be computed by solving the
LP (\ref{eq:discLPtrans-1}). Since any optimal policy derived from the
LP (\ref{eq:discLPtrans-1}) is still optimal if the right-hand sides
of the equality constraints are replaced by arbitrary positive numbers
(see e.g., \cite[Corollary~3, Remark~6]{hordijkKallenberg1984}), it
follows that an optimal policy for the original transient MDP can be
computed by solving the LP
\begin{equation}\label{eq:discLPtrans-2}
  \begin{aligned}
    \text{minimize} &\quad \sum_{x \in \X}\sum_{a \in
      A(x)}c(x, a) z_{x, a} &\\
    \text{such that} &\quad \sum_{a \in A(x)}z_{x, a} -
    \sum_{y \in \X}\sum_{a \in A(y)}q(x |
    y, a)z_{y, a} = 1, & x \in \X, \\
    &\quad z_{x, a} \geq 0, & x \in \X, \ a
    \in A(x).
  \end{aligned}
\end{equation}
This provides an alternative derivation of the LP~\eqref{eq:discLPtrans-2} for transient MDPs
provided in Denardo~\cite{denardo2016}, where it is shown that the
LP~\eqref{eq:discLPtrans-2} can be solved using at most $(m-n)k^*$
iterations of the block-pivoting simplex method corresponding to
Howard's policy iteration algorithm, where $k^*$ is the smallest
integer $k$ that satisfies $1 > K(1 - (1/K))^k$
\cite[Theorem~2]{denardo2016}.  This implies that the required number
of iterations is $O((m-n)K \log K)$, which matches the estimate
(\ref{eq:32}) for the LP
(\ref{eq:discLPtrans}) obtained using \cite[Theorem~3]{scherrer2016}.
If $K = 1$, then
$\tilde{\beta} = 0$ and the problem can be solved by simply selecting,
for each $x \in \X$, an action minimizing $c(x, a)$ over $a \in A(x)$.
Denardo~\cite{denardo2016} also showed that the LP~\eqref{eq:discLPtrans-2} 
can be solved using at most $(m - n)j^*$ iterations of the simplex
method with Dantzig's rule, where $j^*$ is the smallest integer $j$
that satisfies $1 > (e\tau)(1 - (1/e\tau))^j$ and $\tau$ is the
function defined in Remark~\ref{rem:tau}. This implies that the
simplex method with Dantzig's rule requires at most
\begin{equation}\label{eq:36}
O((m-n)(e\tau) \log(e\tau))
\end{equation}
iterations to solve the LP (\ref{eq:discLPtrans-2}).

\begin{rem}
  Applying the simplex method with Dantzig's rule to
  (\ref{eq:discLPtrans}) can be viewed as applying a certain pivoting
  rule to the LP (\ref{eq:discLPtrans-2}). In particular, given a
  non-optimal basic feasible solution to (\ref{eq:discLPtrans-2})
  corresponding to the non-optimal stationary policy $\phi$, the
  variable $z_{x, a}$ that enters the basis under this pivoting rule
  is the one minimizing
  \begin{equation}
    \label{eq:34}
    \tilde{c}(x, a) + \tilde{\beta}\sum_{y \in \X}\tilde{p}(y | x,
    a)\tilde{v}_{\tilde{\beta}}^\phi(y)  - v_{\tilde{\beta}}^\phi(x) = \frac{1}{\mu(x)}\left[c(x,
      a) + \sum_{y \in \X}q(y | x, a)v^\phi(y) - v^\phi(x)\right],
  \end{equation}
  where the expression in the square brackets on the right-hand side
  of (\ref{eq:34}) is precisely the reduced cost, for the variable
  $z_{x, a}$, associated with the basis corresponding to $\phi$. It
  follows from (\ref{eq:33}) that this pivoting rule for the LP
  (\ref{eq:discLPtrans-2}) considered in Denardo~\cite{denardo2016} is
  strongly polynomial when $K$ is fixed. This algorithm is not the
  same as applying Dantzig's rule to the LP (\ref{eq:discLPtrans-2}),
  however; see Remark~\ref{rem:dantzig-counterexample-total} below.
\end{rem}

\begin{rem}\label{rem:dantzig}
  To compare the estimates (\ref{eq:33}) and (\ref{eq:36}) for the simplex method with
  Dantzig's rule for LPs \eqref{eq:discLPtrans} and \eqref{eq:discLPtrans-2} respectively, consider
  the functions  $f(n, K) := nK \log K$ and
  $g(e\tau) := e\tau \log(e\tau)$. Using these notations, the estimate
  (\ref{eq:33}) is
  \begin{equation}\label{eq:37}
    O((m-n)f(n, K)),
  \end{equation}
  and the estimate (\ref{eq:36}) is
  \begin{equation}\label{eq:38}
    O((m-n)g(e\tau)).
  \end{equation}
 If $K$ is fixed, then the estimate (\ref{eq:37}) is better
  than the estimate (\ref{eq:38}).  This is because, when $K$ fixed,
  $f(n, K) = O(n)$ while $nK \geq e\tau \geq n-1 + K$ implies that
  $g(e\tau) = O(n \log n)$. In addition, if $\tau \equiv K$, then the
  estimate (\ref{eq:37}) is also better than (\ref{eq:38}) because
  $g(e\tau) = nK\log nK \geq nK\log K = f(n, K)$. On the other hand,
  for some particular values of $n$, $K$, and $\tau$ it is possible that
  $f(n, K) > g(e\tau)$. For example, consider the MDP with $n=10$
  states and 1 action per state, where for states 1 through 9 the
  process stops after one transition, and for state 10 the process
  stops with probability 1/5 and continues with probability 4/5. Then
  $K=5$ and $e\tau = 9 + 5 = 14$, which implies
  $f(n, K) \geq 10\cdot 5 \cdot \log(5) > 14 \cdot \log(14) =
  g(e\tau)$.
\end{rem}

\begin{rem}\label{rem:transientHoward}
  Consider Howard's policy iteration algorithm for the discounted MDP
  defined by the HV transformation, which according to \cite[p.\
  68]{kallenberg1983} is equivalent to a block-pivoting simplex method
  for the LP (\ref{eq:discLPtrans}). Given $\phi \in \F$ and recalling
  that $\tilde{x}$ is a cost-free absorbing state, an improved policy
  $\phi^+ \in \F$ is constructed (when possible) as follows. For each
  $x \in \X$, $\phi^+(x)$ is taken to be any action belonging to
  \begin{equation}
    \label{eq:26}
    \argmin_{a \in A(x)}\left[\tilde{c}(x, a) + \tilde{\beta}\sum_{y
        \in \X}\tilde{p}(y | x, a)\tilde{v}_{\tilde{\beta}}^\phi(y)\right].
  \end{equation}
  It follows from the definitions of $\tilde{c}$ and $\tilde{p}$ and
  Proposition~\ref{prp:transPolEval} that for each $x \in \X$, the set
  (\ref{eq:26}) is equal to
  \begin{equation}
    \label{eq:27}
    \argmin_{a \in A(x)}\left[ c(x, a) + \sum_{y \in \X}q(y | x, a)v^\phi(y) \right].
  \end{equation}
  Under Howard's policy iteration algorithm for the original transient
  MDP, which according to the arguments in \cite[p.\
  68]{kallenberg1983} and \cite[pp. 55-56]{hordijkKallenberg1984} is
  equivalent to a block-pivoting simplex method for the LP
  (\ref{eq:discLPtrans-2}), given $\phi \in \F$ an improved policy
  $\phi^+$ is constructed (when possible) by taking, for each
  $x \in \X$, $\phi^+(x)$ to be any action belonging to
  (\ref{eq:27}). Since for each $x \in \X$ the sets (\ref{eq:26}) and
  (\ref{eq:27}) are equal, it follows that there is a one-to-one
  correspondence between sequences of policies generated by Howard's
  policy iteration algorithm for the discounted MDP defined by the HV
  transformation, and sequences of policies generated by Howard's
  policy iteration algorithm for the original transient MDP. Using
  Scherrer's~\cite[Theorem~3]{scherrer2016} $O(mK \log K)$ iteration
  bound for Howard's policy iteration algorithm for discounted MDPs,
  we therefore obtain the bound derived by
  Denardo~\cite[Theorem~2]{denardo2016} for the original transient
  MDP.

\end{rem}

\begin{rem}\label{rem:dantzig-counterexample-total}
  According to Remark~\ref{rem:transientHoward}, starting with the
  same basic variables, the sequences of basic variables for
  implementations of block-pivoting simplex methods for the LPs
  (\ref{eq:discLPtrans}) and (\ref{eq:discLPtrans-2}) coincide. This
  is not true for the simplex method with Dantzig's rule, however. To
  confirm this, consider the following transient MDP. The set of
  states is $\X = \{1,2\},$ 
  and the sets of available actions are
  $A(1) = A(2) = \{a, b\}$. The transition rates are
  $q(1 | 1, a) = 2/3$, $q(2 | 1, a) = 1/6$,
  $q(1 | 1, b) = q(2 | 1, b) = 1/3$, $q(1 | 2, a) = 2/3$,
  $q(2 | 2, a) = 1/6$, $q(1 | 2, b) = 1/12$, and $q(2 | 2, b) =
  5/6$. The one-step costs are $c(1, a) = -0.91$, $c(1, b) = -0.56$,
  $c(2, a) = -0.19$, and $c(2, b) = -0.8$. One can verify that the
  function $\mu$ defined by $\mu(1) = 8$ and $\mu(2) = 10$ satisfies
  (\ref{eq:SH}) with $V \equiv 1$. The total-cost LP given by
  (\ref{eq:discLPtrans-2}) is
  \begin{equation}\label{eq:LP-total}
  \begin{aligned}
    \text{minimize} &\quad -0.91z_{1, a} - 0.56z_{1, b} - 0.19z_{2, a} -0.8z_{2,b}
    \\
    \text{such that} &\quad \frac{1}{3}z_{1, a} + \frac{2}{3}z_{1, b} -
    \frac{2}{3}z_{2, a} - \frac{1}{12}z_{2, b} = 1\\
    &\quad -
    \frac{1}{6}z_{1, a} - \frac{1}{3}z_{1, b}  + \frac{5}{6}z_{2, a} + \frac{1}{6}z_{2, b} = 1\\
    &\quad z_{1, a}, z_{1, b}, z_{2, a}, z_{2, b} \geq 0,
  \end{aligned}
\end{equation}
and, letting $K = 10$, the LP (\ref{eq:discLPtrans}) for the
discounted MDP defined by the HV transformation with discount factor
$\tilde{\beta} = (K-1)/K = 9/10$ is
  \begin{equation}\label{eq:LP-discounted}
  \begin{aligned}
    \text{minimize} &\quad -0.11375\tilde{z}_{1, a} - 0.07\tilde{z}_{1, b} - 0.019\tilde{z}_{2, a} -0.08\tilde{z}_{2,b}
    \\
    \text{such that} &\quad \frac{1}{3}\tilde{z}_{1, a} + \frac{2}{3}\tilde{z}_{1, b} -
    \frac{8}{15}\tilde{z}_{2, a} - \frac{1}{15}\tilde{z}_{2, b} = 1\\
    &\quad -
    \frac{5}{24}\tilde{z}_{1, a} - \frac{5}{12}\tilde{z}_{1, b}  + \frac{5}{6}\tilde{z}_{2, a} + \frac{1}{6}\tilde{z}_{2, b} = 1\\
    &\quad \tilde{z}_{1, a}, \tilde{z}_{1, b}, \tilde{z}_{2, a}, \tilde{z}_{2, b} \geq 0,
  \end{aligned}
\end{equation}
where, according to the remarks following (\ref{eq:discLPtrans-1}),
the variable $\tilde{z}_{\tilde{x}, \tilde{a}}$ has been removed. For
both LPs, suppose the initial basic feasible solution for the simplex method with Dantzig's rule is the one defined by the stationary policy $\phi$ 
with $\phi(1) = b$ and $\phi(2) = a$; namely, for both LPs the basic variables are those corresponding to the state-action pairs $(1, b)$ and $(2, a)$. Consider the first iterations of the simplex method with Dantzig's rule for the LPs~(\ref{eq:LP-total}) and (\ref{eq:LP-discounted}. 
For the LP (\ref{eq:LP-total}), the basic variable $z_{2, a}$ is the unique variable to leave the basis, while for the LP (\ref{eq:LP-discounted}) the basic variable $\tilde{z}_{1, b}$ is the unique variable to leave the basis.
\end{rem}

\begin{rem}\label{rem3}
  If (\ref{eq:T-finite}) holds, it holds with the same upper bound $K$
  if the transition rates $q$ are replaced with the transition rates
  $\beta q$, where $\beta\in (0,1].$ Hence the results in
  Denardo~\cite[Theorems~1, 2]{denardo2016} and the estimates
  (\ref{eq:32}), (\ref{eq:33}) above imply that the number of
  arithmetic operations needed to compute an optimal policy for a
  discounted MDP satisfying Assumption T can be bounded by a
  polynomial in $m$ that does not depend on the discount factor
  $\beta\in (0,1].$ In particular, these bounds hold for all discount
  factors $\beta\in (0,1].$ If $\beta=0,$ the discounted problem
  becomes a one-step problem, which is equivalent to a problem with
  $K=1;$ this case was discussed in the paragraph preceding
  Remark~\ref{rem:dantzig}.
\end{rem}

\begin{rem}\label{rem:computing-K}
  For $x \in \X$, let
  $\tau(x) := \sup_{\phi \in \F}\sum_{n=0}^\infty Q_\phi^n e(x)$. Then
  $K_\tau := \max_{x \in \X}\tau(x)$ is the smallest constant $K$
  satisfying (\ref{eq:T-finite}). The natural question is how to
  compute $K_\tau$. One method to compute $K_\tau$ consists in the
  following. First, compute an optimal policy $\phi_*$ for a transient
  MDP that is identical to the original MDP except that all one-step
  costs are equal to $-1$. Then, compute the value function
  $v^{\phi_*}$ of this optimal policy, and set
  $K_\tau = \max_{x \in \X}v^{\phi_*}(x)$. As discussed in the
  paragraph following (\ref{eq:discLPtrans-2}), the policy $\phi_*$
  can be computed using $O((m-n)K_\tau \log K_\tau)$ iterations of
  Howard's policy iteration algorithm. Further, the function
  $v^{\phi_*}$ can be computed by solving a system of $n$ linear
  equations using Gaussian elimination in $O(n^3)$ arithmetic
  operations; for other methods see e.g., \cite{strassen1969}, \cite{c-w1990}. 
\end{rem}

\subsection{Extension to uncountable state spaces}
\label{sec:extens-unco-state}

In this section, we assume that the state space $\X$ is a Borel subset
of a complete separable metric space, and that the transition rates are
defined by a Borel-measurable transition kernel $q$ on $\X$ given
$\Gr(A) := \{(x, a) : x \in \X, \ a \in A(x)\}$, which we assume to be
a Borel subset of $\X \times \A$. That  is, 
$q(\cdot | x, a)$ is a finite measure for
every $(x, a) \in \Gr(A)$, and $q(B | \cdot)$ is a Borel-measurable
function on $\Gr(A)$ for every Borel subset $B$ of $\X$. In addition,
the one-step cost function $c:\Gr(A) \rightarrow \R$ is
Borel-measurable.

The set of stationary policies $\F$ is identified with the set of all
Borel-measurable functions $\phi:\X \rightarrow \A$ satisfying
$\phi(x) \in A(x)$ for all $x \in \X$.  To formulate a version of
Assumption~T in this setting, for $\phi \in \F$ define the operator
$Q_\phi$ for Borel-measurable functions $u:\X \rightarrow \R$ by
\begin{equation}\label{eq:q-Borel}
  Q_\phi u(x) := \int_\X u(y)q(dy | x, \phi(x)), \qquad x \in \X,
\end{equation}
and given a Borel-measurable weight function $W:\X \rightarrow \R$ and
a Borel-measurable transition kernel $B(\cdot | \cdot)$ on $\X$ given $\X$,
let
\begin{displaymath}
  \|B\|_W := \sup_{x \in \X}W(x)^{-1}\int_\X W(y)B(dy | x).
\end{displaymath}

\begin{asmpTprime} \
  \begin{enumerate}[(i)]
  \item   There is a Borel-measurable weight function
  $V:\X \rightarrow [1, \infty)$ and a constant $K \geq 1$
  that satisfy
  \begin{equation}
    \label{eq:transienceAsmp-1}
    \| \sum_{n = 0}^\infty Q_\phi^n \|_V \leq K < \infty \quad \text{for
      all} \ \phi \in \F.
  \end{equation}
\item   Moreover, there is a constant $\bar{c} < \infty$ satisfying
  \begin{displaymath}
    \sup_{a \in A(x)}|c(x, a)| \leq \bar{c}V(x) \qquad \text{for all}
    \ x \in \X,
  \end{displaymath}
  and for every $x \in \X$ the mapping
  \begin{displaymath}
    a \mapsto \int_{y \in \X}V(y)q(y | x, a) < \infty, \qquad a \in A(x),
  \end{displaymath}
  is continuous on $A(x)$.
  \end{enumerate}
\end{asmpTprime}

To obtain a reduction to a discounted MDP, we consider the following
setwise-continuity and compactness conditions:

\begin{samepage}
  \begin{asmpS} \
    \begin{enumerate}[(a)]
    \item Statements (i) and (ii) of the Compactness Conditions hold.
    \item For every $x \in \X$, 
    if the
      sequence $\{a_n\}$ in $A(x)$ converges to $a \in A(x)$, then for
      every Borel subset $B$ of $\X$ the sequence $\{q(B | x, a_n)\}$
      converges to $q(B | x, a)$.
    \end{enumerate}
  \end{asmpS}
\end{samepage}

\begin{prp}\label{prp:SH-1}
  Suppose Assumption~S holds. Then Assumption~T'(i) holds if and only if
  there is a Borel-measurable function $\mu:\X \rightarrow [1,
  \infty)$ satisfying $V(x) \leq \mu(x) \leq KV(x)$ and
  \begin{displaymath}
    \mu(x) \geq V(x) + \int_\X \mu(y)q(dy | x, a), \qquad (x, a) \in \Gr(A).
  \end{displaymath}
\end{prp}
\begin{proof}
  This follows from the proof of Proposition~\ref{prp:SH}, with all
  sums replaced with integrals, and by applying the Brown and
  Purves~\cite[Corollary~1]{brownPurves1973} theorem on
  Borel-measurable selection.
\end{proof}
In this setting, the analogue of Lemma~\ref{lem:mu-integral-cont}
holds as well.
\begin{lem}\label{lem:mu-integral-cont-1}
  Suppose Assumption~S and statements (i) and (iii) of T' hold, and
  let $\mu$ be the Borel-measurable function described in the
  statement of Proposition~\ref{prp:SH-1}. Then for every $x \in \X$
  the mapping
  \begin{displaymath}
    a \mapsto \int_\X \mu(y)q(dy | x, a), \qquad a \in A(x),
  \end{displaymath}
  is continuous on $A(x)$.
\end{lem}
\begin{proof}
  This follows from the proof of Lemma~\ref{lem:mu-integral-cont},
  where all sums are replaced with integrals.
\end{proof}

\subsubsection{HV transformation}
\label{sec:hv-transformation}

Let $\B(\X)$ denote the Borel $\sigma$-algebra of $\X$. The definition
of the HV transformation in the setting of a possibly uncountable
state space is identical to the definition presented in
Section~\ref{sec:hoffm-vein-transf}, except that the cost-free
absorbing state $\tilde{x}$ is taken to be isolated from the original
state space $\X$, and the transition probability kernel $\tilde{p}$ is
defined by
\begin{displaymath}
  \tilde{p}(B | x, a) :=
  \begin{cases}
    \frac{1}{\tilde{\beta}\mu(x)}\int_B\mu(y)q(dy | x, a), &\quad
    \text{if} \ B \in \B(\X), \ (x, a) \in \Gr(A), \\
    1 - \frac{1}{\tilde{\beta}\mu(x)}\int_\X \mu(y)q(dy | x, a),
    &\quad \text{if} \ B = \{\tilde{x}\}, \ (x, a) \in \Gr(A), \\
    1, &\quad \text{if} \ B = \{\tilde{x}\}, \ (x, a) = (\tilde{x}, \tilde{a}).
  \end{cases}
\end{displaymath}

\subsubsection{Results}
\label{sec:results-2}

\begin{prp}
  Suppose Assumptions~S and T' hold. Then $v^\phi(x) =
  \mu(x)\tilde{v}_{\tilde{\beta}}^\phi(x)$ for each $\phi \in \F$ and
  $x \in \X$.
\end{prp}
\begin{proof}
  This follows from the proof of Proposition~\ref{prp:transPolEval} by
  defining for $\phi \in \F$ the operator $\tilde{P}_\phi$ applied to integrable
  Borel-measurable functions $u:\X \rightarrow \R$,
  \begin{displaymath}
    \tilde{P}_\phi u(x) := \int_{\tilde{\X}}u(y)\tilde{p}(dy | x,
    \phi(x)), \qquad x \in \X.
  \end{displaymath}
\end{proof}

\begin{lem}
  Suppose Assumptions~S and T' hold. Then the discounted MDP defined
  by the HV transformation also satisfies Assumption~S.
\end{lem}
\begin{proof}
  This follows from the proof of Lemma~\ref{lem:compactnessHV},
  Lemma~\ref{lem:mu-integral-cont-1}, and the fact that the added
  cost-free absorbing state $\tilde{x}$ is isolated from $\X$.
\end{proof}

The special case of Theorem~\ref{thm:transient-Borel} below for
$V \equiv 1$ was proved by Pliska~\cite[Theorem~1.3]{pliska1978}. To
state Theorem~\ref{thm:transient-Borel}, for $\beta \in (0, 1)$ and
$x \in \X,$ define the sets $A_\beta^*(x)$ and $A^*(x)$ by replacing
the sums in (\ref{eq:13}) and (\ref{eq:A-star}), respectively, with
integrals.
\begin{samepage}
\begin{thm}\label{thm:transient-Borel}
  Suppose the original undiscounted total-cost MDP satisfies
  Assumptions~S and T'. Then:
  \begin{enumerate}[(i)]
  \item the value function $v = \mu \tilde{v}_{\tilde{\beta}}$ is the
    unique Borel-measurable function   satisfying the optimality
    equation
  \begin{equation*}
    v(x) = \min_{A(x)}\left[c(x, a) +
      \int_\X v(y)q(dy | x,
      a)\right], \quad x \in \X, 
  \end{equation*}
   and such that
     \begin{equation*}
       \sup_{x \in \X}V(x)^{-1}|v(x)| < \infty;
     \end{equation*}
  \item there is a stationary total-cost optimal policy;
  \item a policy $\phi \in \F$ is total-cost optimal if and only if
    $\phi(x) \in A^*(x)$ for all $x \in \X$, and
    \begin{equation}
      \label{eq:12}
      A^*(x) = \left\{ a \in A(x) \ \left\vert\vphantom{\frac{1}{1}}\right. \ \tilde{v}_{\tilde{\beta}}(x) = \tilde{c}(x, a) +
        \tilde{\beta}\int_\X \tilde{v}_{\tilde{\beta}}(y) \tilde{p}(dy | x, a)\right\}, \quad x \in \X;
    \end{equation}
    in other words, the sets of optimal actions for the original
    transient MDP and for the transformed discounted MDP with
    transition probabilities $\tilde{p}$ coincide.
  \end{enumerate}
\end{thm}
\end{samepage}
\begin{proof}
  This follows from the proof of Theorem~\ref{thm:mainTrans}, where
  instead of \cite{fkz2012} one can use \cite[Proposition~2.1]{schal1993}.
\end{proof}

\section{Average costs per unit time}
\label{sec:average-cost-mdps}

In Section~\ref{sec:hv-ag-transformation}, we provide a slight
modification of the transformation introduced by Akian and
Gaubert~\cite{akianGaubert2013}. Since it can be viewed as an
extension of the HV transformation described in
Section~\ref{sec:hoffm-vein-transf}, we refer to the transformation
given in Section~\ref{sec:hv-ag-transformation} as the \textit{HV-AG}
transformation. Like the HV transformation, the HV-AG transformation
produces a discounted MDP with transition probabilites. According to
Theorem~\ref{thm:optEqs} in Section~\ref{sec:results-1}, for an
average-cost MDP with transition probabilities $q$ satisfying
Assumption~HT and the Compactness Conditions given in
Section~\ref{sec:results}, the HV-AG transformation reduces the
original problem to a discounted one. The finite state and action case
is considered in Section~\ref{sec:finite-state-action}. The
Borel-state case is treated in Section~\ref{sec:extens-unco-state-1}.

\subsection{HV-AG transformation}
\label{sec:hv-ag-transformation}

Suppose Assumption~HT holds. According to Proposition~\ref{prp:SH},
there is a function $\mu: \X \rightarrow [1, \infty)$ that
satisfies $\mu \leq K^*$ and
\begin{equation}
  \label{eq:1}
  \mu(x) \geq 1 + \sum_{y \in \X \setminus \{\ell\}}q(y | x, a)\mu(y),
  \quad x \in \X , \ a \in A(x).
\end{equation}

Objects associated with the discounted MDP will be indicated by a
horizontal bar. The state space is $\bar{\X} := \X \cup \{\bar{x}\}$,
where $\bar{x} \not\in \X$ is a cost-free absorbing state. Letting
$\bar{a}$ denote the only action available at state $\bar{x}$, the
action space is $\bar{\A} := \A \cup \{\bar{a}\}$ and for
$x \in \bar{\X}$ the set of available actions is unchanged if
$x \in \X$, namely
\begin{displaymath}
  \bar{A}(x) :=
  \begin{cases}
    A(x), &\quad \text{if} \ x \in \X, \\
    \{\bar{a}\}, &\quad \text{if} \ x = \bar{x}.
  \end{cases}
\end{displaymath}
Define the one-step
costs $\bar{c}$ by
\begin{displaymath}
  \bar{c}(x, a) :=
  \begin{cases}
    \mu(x)^{-1}c(x, a), &\quad \text{if} \ x \in \X, \ a \in A(x), \\
    0, &\quad \text{if} \ (x, a) = (\bar{x}, \bar{a}).
  \end{cases}
\end{displaymath}
To complete the definition of the discounted MDP, choose a discount factor
\begin{displaymath}
  \bar{\beta} \in \left[\frac{K^*-1}{K^* }, 1\right),
\end{displaymath}
 and let
\begin{displaymath}
  \bar{p}(y | x, a) :=
  \begin{cases}
    \frac{1}{\bar{\beta} \mu(x)}q(y | x, a)\mu(y), & y \in
    \X \setminus \{\ell\}, \ x \in \X, \ a \in A(x),\\
    \frac{1}{\bar{\beta} \mu(x)}[\mu(x) - 1 - \sum_{y \in \X \setminus
      \{\ell\}}q(y | x, a)\mu(y)], & y = \ell, \ x \in \X, \ a \in A(x) \\
    1 - \frac{1}{\bar{\beta} \mu(x)}[\mu(x) - 1], & y =
    \bar{x}, \ x \in \X, \ a \in A(x) \\
    1, & y = \bar{x}, \ (x, a) = (\bar{x}, \bar{a}).
  \end{cases}
\end{displaymath}
Since $\mu$ satisfies (\ref{eq:SH}), $\bar{p}(\cdot | x, a)$ is a
probability distribution on $\X$ for each $x \in \bar{\X}$ and
$a \in \bar{A}(x)$. In addition, the definition of $\bar{A}$ implies
that the sets of policies for the transformed MDP and the original MDP
coincide. Let $\bar{v}_{\bar{\beta}}^\phi(x)$ denote the
$\bar{\beta}$-discounted cost incurred when the initial state of the
transformed MDP is $x \in \bar{\X}$ and the policy $\phi$ is used, and
let $\bar{v}_{\bar{\beta}}(x) := \inf_{\phi \in
  \F}\bar{v}_{\bar{\beta}}(x)$ for $x \in \bar{\X}$.
\begin{rem}
  While the HV-AG transformation applies to transition rates in
  general, the major results in Section~\ref{sec:results-1} pertain to
  the case when these rates are probabilities.
\end{rem}
\begin{rem}
  Akian and Gaubert~\cite{akianGaubert2013} prove their results by
  transforming a perfect-information mean-payoff stochastic game into
  a discounted game with state-dependent discount factors. The version
  of their transformation presented above uses techniques from
  \cite{feinberg2002_smdps} to directly obtain a problem with a single
  discount factor.
\end{rem}
\begin{rem}\label{rem:relat-rosss-transf}
  Ross~\cite{ross1968, ross1968_1} considered MDPs with transition
  probabilities $q$ satisfying the special case of Assumption~HT where
  there is a constant $\alpha$ such that
\begin{displaymath}
  q(\ell | x, a) \geq \alpha > 0 \quad \text{for all} \ \ x \in \X, \ a
  \in A(x),
\end{displaymath}
and introduced a transformation of the transition probabilities that
can be used to reduce the average-cost MDP to a discounted one. In
fact, Ross's~\cite{ross1968, ross1968_1} transformation can be viewed
as a special case of the HV-AG transformation. Namely, taking
$\mu \equiv K = 1/\alpha$, the resulting transition probabilities are
the same in both cases and the one-step costs differ by a factor of
$\alpha$.
\end{rem}

\begin{rem}
  The HV-AG transformation does not apply to the version of
  Assumption~HT with the norm $\|\cdot\|$ being replaced with
  $\|\cdot\|_V,$ when $V$ is unbounded. In particular, $\bar{p}(\bar{x} | x,
  a) \geq 0$ implies that $\mu(x) \leq (1 - \bar{\beta})^{-1}$.
\end{rem}

\subsection{Results}
\label{sec:results-1}

The proofs of Proposition~\ref{prp:1} and Theorem~\ref{thm:optEqs}
below rely on the following lemma.
\begin{lem}\label{lem:1}
  If a bounded function $f: \bar{\X} \rightarrow \R$ satisfies
  $f(\bar{x}) = 0$, then for all $x \in \X$ and $a \in A(x)$
  \begin{equation}\label{eq:mainLem}
    \bar{c}(x, a) + \bar{\beta}\sum_{y \in \bar{\X}}\bar{p}(y | x, a)f(y) =
    \frac{1}{\mu(x)}\left[c(x, a) + \sum_{y \in \X}q(y | x,
      a)\mu(y)[f(y) - f(\ell)] + [\mu(x) - 1]f(\ell)\right].
  \end{equation}
\end{lem}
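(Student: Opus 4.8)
The plan is to verify (\ref{eq:mainLem}) by direct substitution of the definitions of $\bar c$ and $\bar p$ into its left-hand side, followed by a regrouping of terms. Fix $x \in \X$ and $a \in A(x)$. I would begin by splitting the sum $\sum_{y \in \bar\X}\bar p(y \mid x, a)f(y)$ according to the three cases in the definition of $\bar p(\cdot \mid x,a)$ valid for $x \in \X$, namely $y \in \X \setminus \{\ell\}$, $y = \ell$, and $y = \bar x$. Since $f(\bar x) = 0$ by hypothesis, the term corresponding to the absorbing state $\bar x$ drops out, regardless of the value of $\bar p(\bar x \mid x,a)$.

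Next I would substitute the first two lines of the definition of $\bar p$. Each of the surviving transition probabilities carries the factor $1/(\bar\beta\,\mu(x))$, so after multiplying by $\bar\beta$ this factor becomes $1/\mu(x)$ and the discount factor $\bar\beta$ cancels entirely. Adding $\bar c(x,a) = \mu(x)^{-1}c(x,a)$, the left-hand side becomes $\mu(x)^{-1}$ times the quantity $c(x,a) + \sum_{y \in \X \setminus \{\ell\}}q(y \mid x,a)\mu(y)f(y) + \bigl[\mu(x) - 1 - \sum_{y \in \X \setminus \{\ell\}}q(y \mid x,a)\mu(y)\bigr]f(\ell)$, where the last bracket is the contribution of the transition to $\ell$.

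The final step is to regroup the $f(\ell)$ terms. Combining the sum $\sum_{y \in \X \setminus \{\ell\}}q(y \mid x,a)\mu(y)f(y)$ with $-\sum_{y \in \X \setminus \{\ell\}}q(y \mid x,a)\mu(y)f(\ell)$ yields $\sum_{y \in \X \setminus \{\ell\}}q(y \mid x,a)\mu(y)[f(y) - f(\ell)]$, while the remaining piece gives $[\mu(x)-1]f(\ell)$. Finally, since the summand $q(\ell \mid x,a)\mu(\ell)[f(\ell)-f(\ell)]$ vanishes, the sum over $\X \setminus \{\ell\}$ may be replaced by the sum over all of $\X$, producing exactly the right-hand side of (\ref{eq:mainLem}).

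I do not anticipate a genuine obstacle: the statement is an algebraic identity and the proof is pure bookkeeping. The only two points requiring a moment's care are the use of $f(\bar x) = 0$ to discard the absorbing-state term, and the observation that extending the summation index from $\X \setminus \{\ell\}$ to $\X$ is harmless because the $y = \ell$ summand is identically zero. Boundedness of $f$, together with the bound $\mu \le K^*$ and the summability condition (\ref{eq:2}), ensures that every series appearing in the computation converges absolutely, so the rearrangements of terms are justified.
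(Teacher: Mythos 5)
Your proposal is correct and is essentially identical to the paper's proof: both substitute the definitions of $\bar{c}$ and $\bar{p}$ into the left-hand side, use $f(\bar{x})=0$ to discard the absorbing-state term, cancel $\bar{\beta}$ against the $1/(\bar{\beta}\mu(x))$ factors, and regroup the $f(\ell)$ terms, extending the sum from $\X\setminus\{\ell\}$ to $\X$ because the $y=\ell$ summand vanishes. Your added remark on absolute convergence of the series is a small point of extra care the paper leaves implicit.
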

\begin{proof}
  According to the definition of $\bar{c}$, $\bar{\beta}$, and
  $\bar{p}$ in Section~\ref{sec:hv-ag-transformation}, for $x \in \X$
  and $a \in A(x)$\small{
  \begin{align*}
    \bar{c}(x, a) + \bar{\beta}\sum_{y \in \bar{\X}}\bar{p}(y | x,
    a)f(y) &= \frac{c(x, a)}{\mu(x)} + \frac{1}{\mu(x)}\sum_{y \in \X \setminus
             \{\ell\}}q(y | x, a)\mu(y)f(y)
             + \frac{1}{\mu(x)}\left[\mu(x) - 1 -
             \sum_{\X \setminus \{\ell\}}q(y | x,
             a)\mu(y)\right]f(\ell) \\
           &= \frac{1}{\mu(x)}\left[ c(x, a) + \sum_{y \in \X}q(y | x,
             a)\mu(y)[f(y) - f(\ell)] + [\mu(x) - 1]f(\ell)\right].
  \end{align*}}
\end{proof}

Given $\phi \in \F$, the following proposition relates the average
costs incurred in the original MDP with the discounted costs incurred
in the MDP constructed using the HV-AG transformation. Recall that $q$
is \textit{stochastic} if $\sum_{y \in \X}q(y | x, a) = 1$ for all
$x \in \X$ and $a \in A(x)$.


\begin{prp}\label{prp:1}
  Suppose Assumption~HT holds. Let $\phi \in \F$ be a stationary policy and
  $h^\phi(x) := \mu(x)[\bar{v}_{\bar{\beta}}^\phi(x) -
  \bar{v}_{\bar{\beta}}^\phi(\ell)]$ for $x \in \X$. Then
  \begin{equation}\label{eq:correspPol}
    \bar{v}_{\bar{\beta}}^\phi(\ell) + h^\phi(x) =
    c(x, \phi(x)) + \sum_{y \in \X}q(y | x,
    \phi(x))h^\phi(y), \quad x \in \X.
  \end{equation}
  In addition, if the transition rates $q$ are stochastic,
  then $w^\phi \equiv \bar{v}_{\bar{\beta}}^\phi(\ell)$.
\end{prp}
\begin{proof}
  Since the state $\bar{x}$ in the discounted MDP defined by the HV-AG
  transformation is cost-free and absorbing, (\ref{eq:correspPol})
  follows from the fact that
  \begin{displaymath}
    \bar{v}_{\bar{\beta}}^\phi(x) = \bar{c}(x, \phi(x)) +
    \bar{\beta}\sum_{y \in \bar{\X}}\bar{p}(y | x,
    \phi(x))\bar{v}_{\bar{\beta}}^\phi(y), \quad x \in \X,
  \end{displaymath}
  and Lemma~\ref{lem:1}. Iterating (\ref{eq:correspPol}) gives
  \begin{equation}
    \label{eq:9}
    N\bar{v}_{\bar{\beta}}^\phi (\ell) + h^\phi(x) = \sum_{n = 0}^{N -
      1}Q_\phi^n c_\phi(x) + Q_\phi^Nh^\phi(x), \quad x \in \X, \ N =
    1, 2, \dots \ .
  \end{equation}
  Since $c$ is bounded, the function $h^\phi$ is bounded as well. The
  equality $w^\phi \equiv \bar{v}_{\bar{\beta}}^\phi(\ell)$ then
  follows by dividing both sides of (\ref{eq:9}) by $N$ and letting $N
  \rightarrow \infty$.

\end{proof}


\begin{lem}\label{lem:compactnessHVAG}
  Suppose Assumption~HT and the Compactness Conditions hold. Then the
  discounted MDP defined by the HV-AG transformation also satisfies
  the Compactness Conditions.
\end{lem}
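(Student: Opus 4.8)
The plan is to mirror the proof of Lemma~\ref{lem:compactnessHV}, checking each of the four Compactness Conditions for the transformed MDP in turn. Condition (i) is immediate: for $x \in \X$ the action set $\bar{A}(x)$ equals the compact set $A(x)$, and for $x = \bar{x}$ it is the singleton $\{\bar{a}\}$, so $\bar{A}(x)$ is compact in every case. For condition (ii), I would first record that by (\ref{eq:1}) and the nonnegativity of $q$ and $\mu$ one has $\mu(x) \geq 1$, while Proposition~\ref{prp:SH} gives $\mu(x) \leq K^*$; hence $\mu(x)^{-1} \in (0, 1]$. Since $\bar{c}(x, a) = \mu(x)^{-1}c(x, a)$ for $x \in \X$ and $\bar{c}(\bar{x}, \bar{a}) = 0$, multiplying the bounded, lower semicontinuous cost $c$ by the positive $a$-independent constant $\mu(x)^{-1}$ preserves both boundedness and lower semicontinuity in $a$.

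The substance of the argument is condition (iii), the continuity of $\bar{p}(y \mid x, a)$ in $a$. I would dispose of the easy cases first. Continuity at $x = \bar{x}$ is automatic since $\bar{A}(\bar{x})$ is a singleton. For $y \in \X \setminus \{\ell\}$ and $x \in \X$, the formula $\bar{p}(y \mid x, a) = q(y \mid x, a)\mu(y)/(\bar{\beta}\mu(x))$ is continuous in $a$ directly by Compactness Condition (iii) for $q$, because $\mu(y)$, $\mu(x)$, and $\bar{\beta}$ are constants. The case $y = \bar{x}$, $x \in \X$ is even simpler: there $\bar{p}(\bar{x} \mid x, a) = 1 - (\mu(x) - 1)/(\bar{\beta}\mu(x))$ does not depend on $a$, so it is trivially continuous. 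The only genuinely infinite-dimensional case is $y = \ell$.

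The main obstacle is therefore the continuity in $a$ of
\begin{displaymath}
  \bar{p}(\ell \mid x, a) = \frac{1}{\bar{\beta}\mu(x)}\Bigl[\mu(x) - 1 - \sum_{y \in \X \setminus \{\ell\}}q(y \mid x, a)\mu(y)\Bigr],
\end{displaymath}
which reduces to showing that $a \mapsto \sum_{y \in \X \setminus \{\ell\}}q(y \mid x, a)\mu(y)$ is continuous. When $\X$ is infinite, the termwise continuity supplied by (iii) does not by itself transfer to the weighted sum. Here I would invoke the same device used in the proof of Proposition~\ref{prp:total1}: with the discrete topology on $\X$, Compactness Conditions (iii) and (iv) together make $q$ weakly continuous. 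Concretely, if $a_n \to a$ in $A(x)$, then (iii) gives $q(y \mid x, a_n) \to q(y \mid x, a)$ for each $y$, while (iv) gives $\sum_{y}q(y \mid x, a_n) \to \sum_{y}q(y \mid x, a)$; since the summands are nonnegative, a Scheffé-type argument forces $\sum_{y}|q(y \mid x, a_n) - q(y \mid x, a)| \to 0$, whence $\sum_{y}q(y \mid x, a_n)g(y) \to \sum_{y}q(y \mid x, a)g(y)$ for every bounded $g$. Applying this with $g(y) := \mu(y)\mathbf{1}\{y \neq \ell\}$, which is bounded by $K^*$, yields the required continuity of $\bar{p}(\ell \mid x, \cdot)$.

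Finally, condition (iv) for the transformed MDP is automatic: as recorded just after the definition of $\bar{p}$, each $\bar{p}(\cdot \mid x, a)$ is a probability distribution on $\bar{\X}$, so $\bar{p}(\bar{\X} \mid x, a) \equiv 1$ is constant in $a$ and hence continuous. Having verified (i)--(iv), I would conclude that the discounted MDP produced by the HV-AG transformation satisfies the Compactness Conditions.
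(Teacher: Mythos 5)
Your proof is correct and follows the same route as the paper: check conditions (i)--(iv) directly, with the only nontrivial point being the continuity of $\bar{p}(\ell \mid x, \cdot)$, which both you and the paper derive from Compactness Conditions (iii) and (iv). The paper merely asserts this implication, whereas you supply the Scheff\'e-type argument that justifies passing from termwise continuity plus continuity of the total mass to continuity of the weighted sum $\sum_{y \neq \ell} q(y \mid x, a)\mu(y)$ --- a worthwhile elaboration of the step the paper leaves implicit.
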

\begin{proof}
  Assumptions (i)-(ii) of the Compactness Conditions imply that the
  sets $\bar{A}(x)$ are compact and $\bar{c}$ is bounded and is lower
  semicontinuous in $a$. Assumption (iii) of the Compactness
  Conditions and Lemma~\ref{lem:mu-integral-cont} imply that
  $\bar{p}(y | x, a)$ is continuous in $a \in A(x)$ for all $x \in \X$
  and $y \in \X \setminus \{\ell\}$. Assumption (iii), for state
  $\ell$, and assumption (iv) of the Compactness Conditions imply that
  $\bar{p}(\ell | x, a)$ is continuous in $a \in A(x)$ for all
  $x \in \X$.
\end{proof}

For $x \in \X$, and a constant $w$ and function $h: \X \rightarrow \R$
satisfying the average-cost optimality equation (\ref{eq:acoe}) given
in the statement of Theorem~\ref{thm:optEqs} below, consider the sets
of actions
\begin{equation}\label{eq:A-star-av}
  A_{\text{av}}^*(x) := \left\{a \in A(x) \ \left\vert\vphantom{\frac{1}{1}}\right. \ w + h(x) = c(x, a) + \sum_{y \in
      \X}q(y | x, a) h(y)\right\}, \quad x \in \X.
\end{equation}
Theorem~\ref{thm:optEqs} also follows from Federgruen and
Tijms~\cite[Theorems~2.1, 2.2]{federgruenTijms1978}, where other
recurrence conditions are considered as well.
\begin{thm}\label{thm:optEqs}
  Suppose the original MDP with transition probabilities $q$ satisfies
  Assumption~HT and the Compactness Conditions. Then:
  \begin{enumerate}[(i)]
  \item the constant $w = \bar{v}_{\bar{\beta}}(\ell)$ and the
    function
    $h(x) = \mu(x)[\bar{v}_{\bar{\beta}}(x) -
    \bar{v}_{\bar{\beta}}(\ell)]$, $x \in \X$, satisfy the optimality
    equation
  \begin{equation}\label{eq:acoe}
    w + h(x) = \min_{A(x)}\left[c(x, a) + \sum_{y \in \X}q(y | x,
      a)h(y)\right], \quad x \in \X,
  \end{equation}
  and $\bar{v}_{\bar{\beta}}(\ell)$ is the optimal average cost for
  each initial state.
\item there is a $\phi \in \F$ satisfying $\phi(x) \in A_{\text{av}}^*(x)$
  for all $x \in \X$, where
  \begin{equation}
    \label{eq:14}
    A_{\text{av}}^*(x) = \left\{ a \in A(x) \ \left\vert\vphantom{\frac{1}{1}}\right. \ \bar{v}_{\bar{\beta}}(x) = \bar{c}(x, a) +
        \bar{\beta}\sum_{y \in \bar{\X}}\bar{p}(y | x, a) \bar{v}_{\bar{\beta}}(y)\right\}, \quad x \in \X,
  \end{equation}
  and any such policy is average-cost optimal.
  \end{enumerate}
\end{thm}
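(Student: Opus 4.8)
The plan is to deduce all three statements from the corresponding facts about the $\bar{\beta}$-discounted MDP produced by the HV-AG transformation, mirroring the way Theorem~\ref{thm:mainTrans} was deduced from Proposition~\ref{prp:total1} in the transient case. By Lemma~\ref{lem:compactnessHVAG} the transformed MDP satisfies the Compactness Conditions, so Proposition~\ref{prp:total1} applies to it: $\bar{v}_{\bar{\beta}}$ is the unique bounded solution of the discounted optimality equation $\bar{v}_{\bar{\beta}}(x) = \min_{\bar{A}(x)}[\bar{c}(x,a) + \bar{\beta}\sum_{y\in\bar{\X}}\bar{p}(y|x,a)\bar{v}_{\bar{\beta}}(y)]$ on $\bar{\X}$, there is a stationary $\bar{\beta}$-optimal policy $\phi_*$, and a policy is $\bar{\beta}$-optimal if and only if it selects at each state an action attaining this minimum. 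I would first record that $\bar{v}_{\bar{\beta}}(\bar{x}) = 0$, since $\bar{x}$ is cost-free and absorbing; this is what allows $f = \bar{v}_{\bar{\beta}}$ to be fed into Lemma~\ref{lem:1}.

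To obtain the average-cost optimality equation (\ref{eq:acoe}) in (i), I would apply Lemma~\ref{lem:1} with $f = \bar{v}_{\bar{\beta}}$ to rewrite, for each $x\in\X$, the bracketed expression in the discounted optimality equation. Setting $w := \bar{v}_{\bar{\beta}}(\ell)$ and $h(y) := \mu(y)[\bar{v}_{\bar{\beta}}(y) - \bar{v}_{\bar{\beta}}(\ell)]$, the term $\sum_{y\in\X}q(y|x,a)\mu(y)[\bar{v}_{\bar{\beta}}(y)-\bar{v}_{\bar{\beta}}(\ell)]$ collapses termwise to $\sum_{y\in\X}q(y|x,a)h(y)$, so Lemma~\ref{lem:1} gives
\[
\bar{c}(x,a) + \bar{\beta}\sum_{y\in\bar{\X}}\bar{p}(y|x,a)\bar{v}_{\bar{\beta}}(y) = \frac{1}{\mu(x)}\left[c(x,a) + \sum_{y\in\X}q(y|x,a)h(y) + (\mu(x)-1)w\right].
\]
Because $\mu(x)\ge 1 > 0$ by (\ref{eq:1}) and the summand $(\mu(x)-1)w$ does not depend on $a$, multiplying the discounted optimality equation by $\mu(x)$ and using $\mu(x)\bar{v}_{\bar{\beta}}(x) = h(x) + \mu(x)w$ yields, after cancelling $(\mu(x)-1)w$, exactly $w + h(x) = \min_{A(x)}[c(x,a) + \sum_{y\in\X}q(y|x,a)h(y)]$, which is (\ref{eq:acoe}). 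Boundedness of $h$ follows from the boundedness of $\mu$ (by $K^*$) and of $\bar{v}_{\bar{\beta}}$.

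For the optimality assertions in (i) and (ii) I would invoke Proposition~\ref{prp:1}: since $q$ is stochastic and $c$ is bounded, $w^\phi \equiv \bar{v}_{\bar{\beta}}^\phi(\ell)$ for every $\phi\in\F$, a value not depending on the initial state. Taking the infimum over $\phi\in\F$ and using $\bar{v}_{\bar{\beta}}(\ell) = \inf_{\phi\in\F}\bar{v}_{\bar{\beta}}^\phi(\ell)$ gives $w(x) = \bar{v}_{\bar{\beta}}(\ell) = w$ for every $x\in\X$, so $\bar{v}_{\bar{\beta}}(\ell)$ is the optimal average cost for each initial state. Applied to the discounted-optimal $\phi_*$, this yields $w^{\phi_*}\equiv \bar{v}_{\bar{\beta}}^{\phi_*}(\ell) = \bar{v}_{\bar{\beta}}(\ell) = w$, so $\phi_*$ is average-cost optimal, establishing (ii).

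Finally, (iii) follows from the same identity. Rearranging the display above shows that an action $a\in A(x)$ attains the minimum defining $A_{\text{av}}^*(x)$, i.e.\ $w + h(x) = c(x,a) + \sum_{y\in\X}q(y|x,a)h(y)$, if and only if $\bar{v}_{\bar{\beta}}(x) = \bar{c}(x,a) + \bar{\beta}\sum_{y\in\bar{\X}}\bar{p}(y|x,a)\bar{v}_{\bar{\beta}}(y)$, which is precisely (\ref{eq:14}). Hence any $\phi$ with $\phi(x)\in A_{\text{av}}^*(x)$ for all $x$ is $\bar{\beta}$-optimal for the transformed MDP by Proposition~\ref{prp:total1}(iii), and therefore average-cost optimal by the equality $w^\phi \equiv \bar{v}_{\bar{\beta}}^\phi(\ell) = w$ from Proposition~\ref{prp:1}. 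I expect no genuine obstacle, since the analytic content---attainment of the minimum, existence of a stationary measurable selector, and uniqueness---is already packaged into Proposition~\ref{prp:total1} through the reduction; the only real work is the bookkeeping that turns the discounted optimality equation into (\ref{eq:acoe}). The one place requiring care is confirming $\bar{v}_{\bar{\beta}}(\bar{x}) = 0$ and the termwise identity $\mu(y)[\bar{v}_{\bar{\beta}}(y)-\bar{v}_{\bar{\beta}}(\ell)] = h(y)$ before invoking Lemma~\ref{lem:1}, together with $\mu\ge 1$ to keep every division legitimate.
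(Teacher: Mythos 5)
Your proposal is correct and follows essentially the same route as the paper: Lemma~\ref{lem:compactnessHVAG} to invoke Proposition~\ref{prp:total1} for the transformed MDP, Lemma~\ref{lem:1} applied to the discounted optimality equation (using $\bar{v}_{\bar{\beta}}(\bar{x})=0$ and $\mu\ge 1$) to obtain (\ref{eq:acoe}) and (\ref{eq:14}), and Proposition~\ref{prp:1} to identify $\bar{v}_{\bar{\beta}}(\ell)$ with the optimal average cost and to transfer optimality of the stationary $\bar{\beta}$-optimal policy. The only minor divergence is in part (iii), where the paper appeals to a standard verification theorem for a bounded solution of the average-cost optimality equation (citing Hern\'{a}ndez-Lerma \& Lasserre), whereas you route back through Proposition~\ref{prp:total1}(iii) and Proposition~\ref{prp:1}; both arguments are valid, and yours is slightly more self-contained.
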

\begin{proof}
  Lemma~\ref{lem:compactnessHVAG} implies that statements (i)-(iii) of
  Proposition~\ref{prp:total1} hold for the transformed MDP. In
  particular, there is a stationary $\bar{\beta}$-optimal policy $\phi$ for
  the transformed MDP, which satisfies $\phi(x) \in
  A_{\bar{\beta}}^*(x)$ for all $x \in \X$.

  The validity of (\ref{eq:acoe}) follows from applying Lemma~\ref{lem:1} to the optimality
  equation for the $\bar{\beta}$-discounted MDP defined by the HV-AG
  transformation. Further, Proposition~\ref{prp:1} implies that the
  optimal average cost for each state is
  $\bar{v}_{\bar{\beta}}(\ell)$, so (i) holds.


  Lemma~\ref{lem:1} implies that (\ref{eq:14}) holds, from which the
  existence of a $\phi \in \F$ satisfying $\phi(x) \in
  A_{\text{av}}^*(x)$ for all $x \in \X$ follows. Moreover, since the
  function $h$ is bounded, 
  \begin{displaymath}
    \lim_{N\rightarrow\infty}\frac{1}{N}\E_x^\phi h(x_N) = 0 \qquad
    \text{for all} \ x \in \X.
  \end{displaymath}
  It therefore follows from e.g., \cite[Theorem~5.2.4]{hl-l1996} that
  any $\phi \in \F$ satisfying $\phi(x) \in A_{\text{av}}^*(x)$ for
  all $x \in \X$ is average-cost optimal.
\end{proof}

\begin{cor}\label{cor:alg1}
  Suppose Assumption~HT and the Compactness Conditions hold. If
  an algorithm computes an optimal policy for the discounted MDP
  defined by the HV-AG transformation, then this policy is optimal for
  the original average-cost MDP.
\end{cor}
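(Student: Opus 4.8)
The plan is to deduce the corollary directly from Theorem~\ref{thm:optEqs}, together with the characterization of $\bar{\beta}$-optimal policies for the transformed MDP provided by Proposition~\ref{prp:total1}(iii). The crucial observation is that equation~(\ref{eq:14}) identifies the set $A_{\text{av}}^*(x)$ of average-cost optimal actions in the original MDP with the set of $\bar{\beta}$-optimal actions in the transformed discounted MDP, so that the two notions of optimal policy refer to exactly the same maps $\phi: \X \rightarrow \A$. This is the average-cost analogue of the reasoning behind Corollary~\ref{cor:mainTrans}.

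First I would invoke Lemma~\ref{lem:compactnessHVAG} to note that the transformed MDP satisfies the Compactness Conditions, so that Proposition~\ref{prp:total1} applies to it. By Proposition~\ref{prp:total1}(ii) a stationary $\bar{\beta}$-optimal policy exists, and by Proposition~\ref{prp:total1}(iii) a stationary policy $\phi$ is $\bar{\beta}$-optimal for the transformed MDP if and only if, for each $x \in \X$, the action $\phi(x)$ lies in the set appearing on the right-hand side of~(\ref{eq:14}). (At $\bar{x}$ the only available action is $\bar{a}$, so there is no additional constraint there.) Thus any algorithm that returns an optimal policy for the transformed discounted MDP returns precisely a map $\phi$ with $\phi(x)$ in that set for every $x \in \X$.

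Next I would combine this with Theorem~\ref{thm:optEqs}(iii). Since~(\ref{eq:14}) shows that the right-hand set equals $A_{\text{av}}^*(x)$, and Theorem~\ref{thm:optEqs}(iii) guarantees that any $\phi$ with $\phi(x) \in A_{\text{av}}^*(x)$ for all $x \in \X$ is average-cost optimal for the original MDP, the policy produced by the algorithm is automatically average-cost optimal for the original problem. Hence running such an algorithm on the HV-AG–transformed MDP constitutes an algorithm for the original average-cost MDP, which is the assertion of the corollary.

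I do not anticipate a genuine obstacle here: the substance of the corollary is entirely contained in the equality of optimal-action sets established in Theorem~\ref{thm:optEqs}, so the argument is essentially a restatement. The only point requiring any care is to make explicit that ``an algorithm for the discounted MDP'' outputs a stationary policy of the required form; this is ensured by the existence of a stationary $\bar{\beta}$-optimal policy (Proposition~\ref{prp:total1}(ii)) together with the if-and-only-if characterization in Proposition~\ref{prp:total1}(iii).
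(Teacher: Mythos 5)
Your argument is correct and is exactly the reasoning the paper intends: the corollary is stated without a separate proof as an immediate consequence of Theorem~\ref{thm:optEqs}, and your chain (Lemma~\ref{lem:compactnessHVAG} $\Rightarrow$ Proposition~\ref{prp:total1}(ii)--(iii) for the transformed MDP, then the identification~(\ref{eq:14}) and Theorem~\ref{thm:optEqs}(iii)) is precisely the chain used inside the proof of that theorem. Nothing is missing; you correctly use only the one direction of Theorem~\ref{thm:optEqs}(iii) that is actually available and needed.
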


\begin{rem}\label{rem:avg-nonstationary-policies}
  The average-cost optimal policy referred to in
  Theorem~\ref{thm:optEqs} is in fact optimal over all
  randomized history-dependent policies; see e.g.,\ Hern\'{a}ndez-Lerma
  and Lasserre~\cite[Theorem~5.2.4]{hl-l1996}
\end{rem}
\begin{rem}
  Stationary average-cost optimal policies exist under much more
  general conditions than the ones considered in
  Theorem~\ref{thm:optEqs}. In particular, the Compactness Conditions
  and Assumption~HT imply Conditions (S) and (B) in
  Sch\"{a}l~\cite{schal1993}, as well as Assumptions (W$^*$) and
  (B) in Feinberg et al.~\cite{fkz2012}.
\end{rem}

\begin{rem}
  Under the hypotheses of Theorem~\ref{thm:optEqs}, the average-cost
  optimality equation (\ref{eq:acoe}) has a unique bounded solution up to an
  additive constant; see \cite[Lemma~3.3]{dekkerHordijk1992}. This is
  because Assumption~HT is a special case of the more general weighted
  geometric ergodicity condition considered in
  \cite{dekkerHordijk1992}; see \cite{dekkerEtal1994} for
  relationships between this condition and various other ergodicity
  and recurrence assumptions.
\end{rem}

\subsection{Finite state and action sets}
\label{sec:finite-state-action}

In this section, we assume that both $\X$ and $\A$ are finite. Recall
from the paragraph after Remark~\ref{rem:T-vs-HT} that, when the
state and action sets are finite, Assumption~HT is equivalent to the
existence of a constant $K^*$ such that
\begin{equation}\label{eq:T-finite-1}
\sum_{n = 0}^\infty {_\ell\mathbin{Q_\phi^n}} e(x) \leq K^* \qquad \text{for all}  \ \phi
\in \F, \ x
\in \X,
\end{equation}
where $e$ denotes the function on $\X$ that is identically equal to
one. Therefore, in this section we assume without loss of
generality that (\ref{eq:T-finite-1}) holds.

For a finite state and action MDP with transition probabilities $q$
that satisfy Assumption~HT, Corollary~\ref{cor:alg1} implies that a
stationary average-cost optimal policy can be computed by solving the
LP
\begin{equation}\label{eq:discLP}
  \begin{aligned}
    \text{minimize} &\quad \sum_{x \in \bar{\X}}\sum_{a \in
      \bar{A}(x)}\bar{c}(x, a) \bar{z}_{x,a} &\\
    \text{such that} &\quad \sum_{a \in \bar{A}(x)}\bar{z}_{x, a} -
    \bar{\beta}\sum_{y \in \bar{\X}}\sum_{a \in \bar{A}(y)}\bar{p}(x |
    y, a)\bar{z}_{y, a} = 1, & x \in \bar{\X}, \\
    &\quad \bar{z}_{x, a} \geq 0, & x \in \bar{\X}, \ a \in \bar{A}(x).
  \end{aligned}
\end{equation}
Recall that $m = \sum_{x \in \X}|A(x)|$ and $n = |\X|.$ If $K^*>1,$
it follows from Scherrer~\cite[Theorem~3]{scherrer2016} that the LP
(\ref{eq:discLP}) can be solved using 
\begin{displaymath}
  (m-n)\left\lceil \frac{1}{1 - \bar{\beta}} \log \frac{1}{1 -
      \bar{\beta}}\right\rceil = O((m-n)K^* \log K^*)
\end{displaymath}
iterations of the block-pivoting simplex method corresponding to
Howard's policy iteration algorithm.  In addition, 
it follows
from Scherrer~\cite[Theorem~4]{scherrer2016} that the
LP (\ref{eq:discLP}) can alternatively be solved using 
\begin{equation}
  \label{eq:39}
  n(m-n)\left(1 + \frac{2}{1 - \bar{\beta}} \log \frac{1}{1 -
      \bar{\beta}}\right) = O(n(m-n)K^* \log K^*)
\end{equation}
iterations of the simplex method with Dantzig's rule. Observe that $K^*=1$
means that the state $\ell$ is absorbing under each stationary policy,
and a stationary policy $\phi$ is average-cost optimal if and only if
$c(\ell,\phi(\ell))=\min\{c(\ell,a):a\in A(\ell)\}.$

\begin{rem}\label{Rem15}
  According to \cite[Proposition~12]{akianGaubert2013}, there is a
  one-to-one correspondence between sequences of policies generated by
  Howard's policy iteration algorithm for the discounted MDP defined
  by the HV-AG transformation, and sequences of policies generated by
  Howard's policy iteration algorithm for the original unichain
  average-cost MDP. In particular, under Howard's policy iteration
  algorithm for the discounted MDP, an improved policy $\phi^+$ is
  constructed (when possible) by taking, for each $x \in \X$,
  $\phi^+(x)$ to be any action belonging to
  \begin{equation}
    \label{eq:29}
    \argmin_{a \in A(x)}\left[ \bar{c}(x, a) + \bar{\beta}\sum_{y \in
        \X}\bar{p}(y | x, a)\bar{v}_{\bar{\beta}}^\phi(y)\right].
  \end{equation}
  Under Howard's policy iteration algorithm for unichain average-cost
  MDPs, given $\phi \in \F$ an improved policy $\phi^+$ is constructed
  by first obtaining a constant $g$ and a function $h$
  that satisfy the system of equations
  \begin{equation}
    \label{eq:31}
    g + h(x) = c(x, \phi(x)) + \sum_{y \in \X}q(y | x, \phi(x)) h(y), \qquad
    x \in \X,
  \end{equation}
  and then, for every $x \in \X$, taking $\phi^+(x)$ to be any action
  belonging to
  \begin{equation}
    \label{eq:30}
    \argmin_{a \in A(x)}\left[c(x, a) + \sum_{y \in \X}q(y | x, a)h(y)\right].
  \end{equation}
  Let
  $h^\phi(x) := \mu(x)[\bar{v}_{\bar{\beta}}^\phi(x) -
  \bar{v}_{\bar{\beta}}^\phi(\ell)]$ for $x \in \X$. According to
  Proposition~\ref{prp:1}, the constant
  $\bar{v}_{\bar{\beta}}^\phi(\ell)$ and the function $h^\phi$ satisfy
  (\ref{eq:31}). Further, the definitions of $\bar{c}$ and $\bar{p}$
  and Lemma~\ref{lem:1} imply that for each $x \in \X$ the set
  (\ref{eq:30}) is equal to the set (\ref{eq:29}). This implies that
  Howard's policy iteration algorithm for the discounted MDP defined
  in Section~\ref{sec:hv-ag-transformation} is equivalent to a
  particular version of Howard's policy iteration algorithm for the
  original unichain average-cost MDP.
  Since both of these policy iteration algorithms correspond to
  block-pivoting simplex methods (see \cite[pp.\ 68, 122]{kallenberg1983}, it
  follows from Scherrer \cite[Theorem~3]{scherrer2016} that, when
  there is a state that is recurrent under all stationary policies, the well-known LP for
  unichain average-cost MDPs, see e.g., \cite[LP 4.6.7]{kallenberg1983},
   \begin{equation}\label{eq:LP-ACStand}
  \begin{aligned}
    \text{minimize} &\quad \sum_{x \in {\X}}\sum_{a \in
      {A}(x)}{c}(x, a) {z}_{x,a}
    \\
    &\quad \sum_{a \in {A}(x)}{z}_{x, a} -
    \sum_{y \in {\X}}\sum_{a \in {A}(y)}q(x |
    y, a){z}_{y, a} = 0, & x \in {\X}, \\
    &\quad    \sum_{y \in {\X}}\sum_{a \in {A}(y)}{z}_{y, a} = 1, & x \in {\X},  \\
     &\quad {z}_{x, a} \geq 0, & x \in {\X}, \ a \in {A}(x),
  \end{aligned}
\end{equation}
   can
  be solved using $O((m-n)K^* \log K^*)$ iterations of a block-pivoting
  simplex method.

\end{rem}

\begin{rem}\label{rem:computing-Kstar}
  For $x \in \X$, let
  $\tau_\ell(x) := \sup_{\phi \in \F}\sum_{n=0}^\infty
  {_\ell\mathbin{Q_\phi^n}} e(x)$. Then
  $K_\ell := \max_{x \in \X}\tau_\ell(x)$ is the smallest constant
  $K^*$ satisfying (\ref{eq:T-finite-1}). The iteration estimate for
  Howard's policy iteration algorithm for average-cost MDPs satisfying
  (\ref{eq:T-finite-1}) that follows from Akian and
  Gaubert~\cite[Corollary~15]{akianGaubert2013} is
  $O((m-n)K_\ell \log K_\ell)$. One method to compute $K_\ell$ consists of
  the following. First, compute an optimal policy $\phi_*$ for a
  transient MDP that is identical to the original MDP, except that
  state $\ell$ is removed and all one-step costs are equal to $-1$.
  Then, compute the value function $v^{\phi_*}$ of this optimal
  policy, set
  \begin{displaymath}
    v^{\phi_*}(\ell) := \max_{a \in A(x)}\left[1 + \sum_{y \neq
        \ell}q(y | \ell, a)v^{\phi_*}(y)\right],
  \end{displaymath}
  and set $K_\ell = \max_{x \in \X}v^{\phi_*}(x)$. According to
  Denardo~\cite[Theorem~2]{denardo2016}, the policy $\phi_*$ can be
  computed using $O((m-n)K_\ell \log K_\ell)$ iterations of Howard's
  policy iteration algorithm. Further, the function $v^{\phi_*}$ can
  be computed by solving a system of $n-1$ linear equations, using
  Gaussian elimination in $O(n^3)$ arithmetic operations; for other
  methods see e.g., \cite{strassen1969}, \cite{c-w1990}. 
\end{rem}

\begin{rem}
  Applying the simplex method with Dantzig's rule to the LP (\ref{eq:discLP}) 
  can be viewed as applying a certain pivoting rule to the LP
  (\ref{eq:LP-ACStand}). In particular, for $\phi \in \F$ let
  $h^\phi(x) := \mu(x)[\bar{v}_{\bar{\beta}}^\phi(x) -
  \bar{v}_{\bar{\beta}}^\phi(\ell)]$ for $x \in \X$.  Given a
  non-optimal basic feasible solution to (\ref{eq:LP-ACStand})
  corresponding to the non-optimal stationary policy $\phi$, it
  follows from Lemma~\ref{lem:1} and Proposition~\ref{prp:1} that the
  variable $z_{x, a}$ that enters the basis under this pivoting rule
  is the one minimizing
  \begin{equation}
    \label{eq:35}
    \bar{c}(x, a) + \bar{\beta}\sum_{y \in \X}\bar{p}(y | x,
    a)\bar{v}_{\bar{\beta}}^\phi(y)  - \bar{v}_{\bar{\beta}}^\phi(x) =
    \frac{1}{\mu(x)}\left[c(x, a) + \sum_{y \in \X}q(y | x,
      a)h^\phi(y) - w^\phi - h^\phi(x)\right],
  \end{equation}
  and the variable that leaves the basis is $z_{x \phi(x)}$. 
  According to (\ref{eq:39}), this
  pivoting rule for the LP (\ref{eq:LP-ACStand}), that is typically 
  used to solve unichain average-cost MDPs, is strongly polynomial when
  $K^*$ is fixed. This algorithm is not the same as applying Dantzig's
  rule to the LP (\ref{eq:LP-ACStand}), however; see
  Remark~\ref{rem:discAvgLP}.
\end{rem}

\begin{rem}\label{rem:discAvgLP}
  Since an MDP satisfying Assumption~HT is unichain, an optimal policy
  under the average-cost criterion can be computed by solving the LP~\eqref{eq:LP-ACStand};
  see e.g.,
  \cite[LP~4.6.7]{kallenberg1983}. As follows from Remark~\ref{Rem15}, under Assumption HT, starting with the same basic variables, the sequences of basic variables for implementations of block-pivoting simplex methods for
  the LPs~\eqref{eq:discLP} and \eqref{eq:LP-ACStand} coincide.  However, this is not true for the
  the simplex method with Dantzig's rule. To confirm this, let us consider the following example.  The set of
  states is $\X = \{1, 2\}$ and the sets of available actions are
  $A(1) = A(2) = \{a, b\}$. The transition probabilities form
  stochastic vectors given by $p(1 | 1, a) = 1/2$, $p(1 | 1, b) = 0$,
  $p(1 | 2, a) = 1/3$, and $p(1 | 2, b) = 1/2$. The one-step costs are
  $c(1, a) = c(1, b) = 1$ and $c(2, a) = c(2, b) = 2$. Letting
  $\ell = 1$, one can verify that the function $\mu$ defined by
  $\mu(1) = 10$ and $\mu(2) = 3$ satisfies (\ref{eq:1}) with
  $V \equiv 1$.  The average-cost LP given by the LP (\ref{eq:LP-ACStand}) is
  \begin{equation}\label{eq:LP-AV}
  \begin{aligned}
    \text{minimize} &\quad z_{1, a} + z_{1, b} + 2z_{2, a} + 2z_{2,b}
    \\
    \text{such that} &\quad \frac{1}{2}z_{1, a} + z_{1, b} -
    \frac{1}{3}z_{2, a} - \frac{1}{2}z_{2, b} = 0\\
    &\quad -
    \frac{1}{2}z_{1, a} - z_{1, b}  + \frac{1}{3}z_{2, a} + \frac{1}{2}z_{2, b} = 0\\
    &\quad z_{1, a} + z_{1, b} + z_{2, a} + z_{2, b} = 1 \\
    &\quad z_{1, a}, z_{1, b}, z_{2, a}, z_{2, b} \geq 0,
  \end{aligned}
\end{equation}
and the LP (\ref{eq:discLP}) for the discounted MDP defined by the
HV-AG transformation is
\begin{equation}\label{eq:LP-DISC}
  \begin{aligned}
    \text{minimize} &\quad \frac{1}{10}z_{1, a} + \frac{1}{10}z_{1, b}
    + \frac{1}{3}z_{2, a} + \frac{1}{3}z_{2, b}\\
    \text{such that} &\quad \frac{1}{4}z_{1, a} + \frac{2}{5}z_{1, b}
    - \frac{1}{6}z_{2, b} = 1 \\
    &\quad -\frac{3}{20}z_{1, a} - \frac{3}{10}z_{1, b} +
    \frac{1}{3}z_{2, a} + \frac{1}{2}z_{2, b} = 1 \\
    &\quad z_{1, a}, z_{1, b}, z_{2, a}, z_{2, b} \geq 0.
  \end{aligned}
\end{equation}
For both LPs, suppose the initial basic feasible solution for the
simplex method with Dantzig's rule is the one defined by the
stationary policy $\phi$ where $\phi(1) = b$ and $\phi(2) = a$;
namely, the basic variables are $z_{1, b}$ and $z_{2, a}$. Consider
the first iteration of this simplex method. For the LP
(\ref{eq:LP-AV}), the basic variable $z_{1, b}$ is the unique variable
to leave the basis, while for the LP (\ref{eq:LP-DISC}) the basic
variable $z_{2, a}$ is the unique variable to leave the basis.
\end{rem}

\begin{rem}
  Consider an LP with $n$ constraints and $m$ variables, where the
  positive elements of every basic feasible solution are bounded below
  by $\delta$ and bounded above by $\gamma$. By generalizing the
  analysis in Ye~\cite{ye2011} for discounted MDPs, it is proved in
  Kitahara and Mizuno~\cite[Theorem~3]{kitaharaMizuno2013} that the
  simplex method with Dantzig's rule requires at most
  \begin{displaymath}
    O\left(nm\frac{\gamma}{\delta} \log \frac{\gamma}{\delta}\right)
  \end{displaymath}
  iterations to return an optimal solution. For the LP
  (\ref{eq:discLP}), $\delta = 1$ and
  $\gamma = (1 - \tilde{\beta})^{-1} = K^*$ satisfy the hypotheses of
  this result. Therefore, it follows from
  \cite[Theorem~3]{kitaharaMizuno2013} that an average-cost optimal
  policy can be computed in strongly polynomial time when $K^*$ is
  fixed, by applying the simplex method with Dantzig's rule to the LP
  (\ref{eq:discLP}). However, \cite[Theorem~3]{kitaharaMizuno2013}
  does not imply an analogous statement for the LP
  (\ref{eq:LP-ACStand}) for unichain average-cost MDPs. This is
  because, for such MDPs, every basic feasible solution of
  (\ref{eq:LP-ACStand}) is the vector of state-action frequencies
  under some stationary policy
  \cite[Remark~4.7.4]{kallenberg1983}. Even for MDPs satisfying
  Assumption~HT with a fixed $K^*$, these frequencies can decrease
  exponentially with the number of states. To verify this, for
  $n = 2,3, \dots$ consider an MDP with state set
  $\X := \{1, \dots, n\}$, a single action 0 available at every state,
  transition probabilities
  $p(1 | 1, 0) = p(n | i, 0) = p(i | i+1, 0) := 1/2$ for
  $i = 1, \dots, n-1$, and arbitrary real-valued one-step
  costs. Observe that for $n = 1, 2, \dots$, this MDP satisfies
  Assumption~HT with $\ell = n$ and $K^* = 2$. In addition, the unique
  feasible solution to (\ref{eq:LP-ACStand}) for this MDP is
  \begin{displaymath}
    z_{1, 0} = \left(\frac{1}{2}\right)^{n-1}, \quad z_{i, 0} =
    \left(\frac{1}{2}\right)^{n-i+1}, \ \text{for} \ i=2, \dots, n.
  \end{displaymath}
  Thus, there is no $\delta>0$ such that $z_{1,0}\ge\delta$ for all $n=2,3,\ldots\ .$
\end{rem}

\subsection{Extension to uncountable state spaces}
\label{sec:extens-unco-state-1}

For $\phi \in \F$, let $\mathbin{_\ell Q_\phi}$ be defined for an
integrable Borel-measurable $u:\X \rightarrow \R$ as
\begin{displaymath}
  \mathbin{_\ell Q_\phi}u(x) := \int_{\X \setminus \{\ell\}}u(y)q(dy |
  x, \phi(x)), \qquad x \in \X.
\end{displaymath}
The version of
Assumption~HT that we consider when the state space is possibly
uncountable is as follows:
\begin{samepage}
  \begin{asmpHTprime} \
    \begin{enumerate}[(i)]
    \item There is a state $\ell \in
      \X$ 
      and a constant $K^*$ satisfying
      \begin{equation}\label{eq:HT-1}
        \| \sum_{n = 0}^\infty \mathbin{_\ell Q_\phi^n}\| \leq K^* < \infty
        \quad \text{for all} \ \phi \in \F.
      \end{equation}
    \item The one-step cost function $c$ is bounded.
    \end{enumerate}
\end{asmpHTprime}
  \end{samepage}

\subsubsection{HV-AG transformation}
\label{sec:hv-ag-transformation-1}

Suppose Assumption~HT' holds. According to Proposition~\ref{prp:SH-1},
there is a Borel-measurable function $\mu:\X \rightarrow [1, \infty)$
that satisfies $\mu \leq K^*$ and
\begin{equation}\label{eq:mu-HVAG}
  \mu(x) \geq 1 + \int_{\X \setminus \{\ell\}}\mu(y)q(dy | x, a),
  \qquad (x, a) \in \Gr(A).
\end{equation}

Here the HV-AG transformation is defined exactly as described in
Section~\ref{sec:hv-ag-transformation}, except that the cost-free
absorbing state $\bar{x}$ is taken to be isolated from $\X$, and the
transition probabilities $\bar{p}$ are defined by
\begin{displaymath}
  \bar{p}(B | x, a) :=
  \begin{cases}
    \frac{1}{\bar{\beta}\mu(x)}\int_B \mu(y)q(dy | x, a), &\quad B \in
    \B(\X \setminus \{\ell\}), \ (x, a) \in \Gr(A), \\
    \frac{1}{\bar{\beta}\mu(x)}[\mu(x) - 1 - \int_{\X \setminus
      \{\ell\}}\mu(y)q(dy | x, a)], &\quad B = \{\ell\}, \ (x, a) \in
    \Gr(A), \\
    1 - \frac{1}{\bar{\beta}\mu(x)}[\mu(x) - 1], &\quad B =
    \{\bar{x}\}, \ (x, a) \in \Gr(A), \\
    1, &\quad B = \{\bar{x}\}, \ (x, a) = (\bar{x}, \bar{a}).
  \end{cases}
\end{displaymath}

\subsubsection{Results}
\label{sec:results-3}

\begin{lem}\label{lem:1-1}
  If a bounded Borel function $f: \bar{\X} \rightarrow \R$ satisfies
  $f(\bar{x}) = 0$, then for any $x \in \X$ and $a \in A(x)$
  \begin{equation}\label{eq:mainLem-1}
    \bar{c}(x, a) + \bar{\beta}\int_{\bar{\X}}f(y)\bar{p}(dy | x, a) =
    \frac{1}{\mu(x)}\left[c(x, a) + \int_\X \mu(y)[f(y) - f(\ell)] q(dy | x,
      a) + [\mu(x) - 1]f(\ell)\right].
  \end{equation}
\end{lem}
\begin{proof}
  This follows from the proof of Lemma~\ref{lem:1}, with all sums
  replaced with integrals.
\end{proof}

\begin{prp}\label{prp:1-1}
  Suppose Assumption~HT' holds. Let $\phi \in \F$ be a stationary policy and
  $h^\phi(x) := \mu(x)[\bar{v}_{\bar{\beta}}^\phi(x) -
  \bar{v}_{\bar{\beta}}^\phi(\ell)]$ for $x \in \X$. Then
  \begin{equation}\label{eq:correspPol-1}
    \bar{v}_{\bar{\beta}}^\phi(\ell) + h^\phi(x) =
    c(x, \phi(x)) + \int_\X h^\phi(y) q(dy | x,
    \phi(x)), \quad x \in \X.
  \end{equation}
  In addition, if the transition rates $q$ are stochastic,
  then $w^\phi \equiv \bar{v}_{\bar{\beta}}^\phi(\ell)$.
\end{prp}
\begin{proof}
  This follows from the proof of Proposition~\ref{prp:1}, where sums
  are replaced with integrals in the appropriate places.
\end{proof}

\begin{lem}\label{lem:asmpS-HVAG}
  Suppose Assumptions~S and HT' hold. Then the discounted MDP defined
  by the HV-AG transformation also satisfies Assumption~S.
\end{lem}
\begin{proof}
  This follows from Lemma~\ref{lem:mu-integral-cont-1} and the proof
  of Lemma~\ref{lem:compactnessHVAG}.
\end{proof}

To state the main result in this section, for $x \in \X$ define
$A_{\text{av}}^*(x)$ by replacing the sum in (\ref{eq:A-star-av}) with
an integral.
\begin{samepage}
\begin{thm} Suppose the original MDP with transition probabilities $q$ satisfies
  Assumptions~S and HT'. Then:
  \begin{enumerate}[(i)]
  \item the constant $w = \bar{v}_{\bar{\beta}}(\ell)$ and the
    function
    $h(x) = \mu(x)[\bar{v}_{\bar{\beta}}(x) -
    \bar{v}_{\bar{\beta}}(\ell)]$, $x \in \X$, satisfy the optimality
    equation
  \begin{equation}\label{eq:acoe-1}
    w + h(x) = \min_{A(x)}\left[c(x, a) + \int_\X h(y)q(dy | x,
      a)\right], \quad x \in \X,
  \end{equation}
  and $\bar{v}_{\bar{\beta}}(\ell)$ is the optimal average cost for
  each initial state.
\item there is a $\phi \in \F$ satisfying $\phi(x) \in A_{\text{av}}^*(x)$
  for all $x \in \X$, where
  \begin{equation}
    \label{eq:14-1}
    A_{\text{av}}^*(x) = \left\{ a \in A(x) \ \left\vert\vphantom{\frac{1}{1}}\right. \ \bar{v}_{\bar{\beta}}(x) = \bar{c}(x, a) +
        \bar{\beta}\int_{\bar{\X}}\bar{v}_{\bar{\beta}}(y)\bar{p}(dy | x, a) \right\}, \quad x \in \X,
  \end{equation}
  and any such policy is average-cost optimal.
  \end{enumerate}
\end{thm}
\end{samepage}
\begin{proof}
  This follows from the proof of Theorem~\ref{thm:optEqs}, where sums
  are replaced with integrals in the appropriate places.
\end{proof}

\bigskip

{\bf Acknowledgements.}  The authors thank the associate editor, two
anonymous referees, Rolando Cavazos-Cadena, Eric Denardo, and Pavlo
Kasyanov for useful comments and suggestions. This research was
partially supported by NSF grants CMMI-1335296 and and CMMI-1636193.




\end{document}